\def\@cite#1#2{{\m@th\upshape\bfseries%
[{#1\if@tempswa{\m@th\upshape\mdseries, #2}\fi}]}}
\theoremstyle{plain}
\newtheorem{theorem}{Theorem}[section]
\newtheorem{corollary}[theorem]{Corollary}
\newtheorem{proposition}[theorem]{Proposition}
\newtheorem{lemma}[theorem]{Lemma}
\theoremstyle{definition}
\newtheorem{definition}[theorem]{Definition}
\newtheorem{example}[theorem]{Example}
\newtheorem{examples}[theorem]{Examples}
\newtheorem{remark}[theorem]{Remark}
\theoremstyle{remark}
\def\bbC{\mathbb C}
\def\bbF{\mathbb F}
\def\bbN{\mathbb N}
\def\bbT{\mathbb T}
     \newcommand{\sB}{\mathcal B}
\newcommand{\fC}{\mathfrak C}     
     \newcommand{\sE}{\mathcal E}
     \newcommand{\sF}{\mathcal F}
     \newcommand{\sH}{\mathcal H}
     \newcommand{\sI}{\mathcal I}
     \newcommand{\sL}{\mathcal L}
     \newcommand{\sM}{\mathcal M}
     \newcommand{\sN}{\mathcal N}
     \newcommand{\sO}{\mathcal O}
     \newcommand{\sT}{\mathcal T}
     \newcommand{\sX}{\mathcal X}
\newcommand{\al}{\alpha}
\newcommand{\be}{\beta}
\newcommand{\vpi}{\varphi}
\newcommand{\si}{\sigma}
\newcommand{\la}{\lambda}
\newcommand{\de}{\delta}
\newcommand{\ca}{\mathrm{C}^*}
\newcommand{\foral}{\text{ for all }}
\newcommand{\id}{{\operatorname{id}}}
\newcommand{\inte}{\operatorname{int}}
\newcommand{\nor}[1]{\left\Vert #1\right\Vert}     
\newcommand{\sca}[1]{\left\langle#1\right\rangle}  
\begin{document}

\title[{Ergodic extensions of endomorphisms of MASAS}]{Ergodic extensions and Hilbert modules associated to endomorphisms of MASAS}

\author[E.T.A. Kakariadis]{Evgenios T.A. Kakariadis}
\address{School of Mathematics and Statistics\\ Newcastle University\\ Newcastle upon Tyne\\ NE1 7RU\\ UK}
\email{evgenios.kakariadis@ncl.ac.uk}

\author[J.R. Peters]{Justin R. Peters}
\address{Department of Mathematics\\Iowa State University\\ Ames\\ Iowa\\ IA 50011\\ USA}
\email{peters@iastate.edu}

\subjclass[2010]{47C15, 37A55, 46L55, 46C10}
\keywords{Maximal abelian selfadjoint algebras, ergodic transformations, Hilbert modules.}
\thanks{The second author acknowledges partial support from the National Science Foundation, DMS-0750986}
\maketitle

\begin{abstract}
We show that a class of ergodic transformations on a probability measure space $(X,\mu)$ extends to a representation of $\mathcal{B}(L^2(X,\mu))$ that is both implemented by a Cuntz family and ergodic. This class contains several known examples, which are unified in this work.

During the analysis of the existence and uniqueness of such a Cuntz family we give several results of individual interest. Most notably we prove a decomposition of $X$ for $N$-to-one local homeomorphisms that is connected to the orthonormal basis of Hilbert modules. We remark that the trivial Hilbert module of the Cuntz algebra $\mathcal{O}_N$ does not have a well-defined Hilbert module basis (moreover that it is unitarily equivalent to the module sum $\sum_{i=1}^n \mathcal{O}_N$ for infinitely many $n \in \mathbb{N}$).
\end{abstract}

\section{Introduction}

In this paper we continue with the examination of representations of dynamical systems implemented by Cuntz families \cite{KakPet13}. We are strongly motivated by the recent work and aspect of Courtney, Muhly and Schmidt \cite{CouMuhSch10} in which the general theory of Hilbert modules is used as an alternate route to examine specific examples, in association with earlier work of Laca \cite{Lac93} where ergodic transformations of $\sB(H)$ in general are examined. In contrast to our previous work \cite{KakPet13}, which is directed to the abstract operator algebraic point of view, here we analyze a class of particular transformations $\vpi \colon X \to X$ of a probability measure space $(X,\mu)$.

There are several well known examples (including the backward shift on infinite words on $N$ symbols, and finite Blaschke products with $N$ factors) where such transformations yield a representation $\al \colon L^\infty(X,\mu) \to L^\infty(X,\mu)$ that is implemented by a Cuntz family. That is, there is a Cuntz family $\{S_i\}_{i=1}^N$ in $\sB(L^2(X,\mu))$ such that
\begin{align*}
\al(M_f) = \sum_{i=1}^N S_i M_f S_i^*,
\end{align*}
where $M_f \in \sB(L^2(X,\mu))$ is the multiplication operator associated to $f \in L^\infty(X,\mu)$, and therefore $\al$ extends to a representation $\al_S$ of $\sB(L^2(X,\mu))$. Our goal here is two-fold. First we give conditions under which the transformation $\vpi \colon X \to X$ defines such a representation $\al \colon L^\infty(X,\mu) \to L^\infty(X,\mu)$ (Proposition \ref{P:decomposition}). Secondly we show that ergodicity of $\vpi \colon X \to X$ (as a transformation of a probability measure space) implies ergodicity of the induced $\al_S \colon \sB(L^2(X,\mu)) \to \sB(L^2(X,\mu))$ (as a representation of a von Neumann algebra) (Theorem \ref{T:ergodic}).

The existence of a Cuntz family implementing $\al \colon L^\infty(X,\mu) \to L^\infty(X,\mu)$ is connected to a decomposition of the space $X$ based on a maximal family of sets (Lemma \ref{L:decomp}). There is a question whether different decompositions yield the same extension. We show that the answer to this question is connected to the existence of an orthonormal basis of a suitable W*-module (Proposition \ref{P:pairing}). As a consequence we obtain a complete invariant on Stacey's multiplicity $n$ crossed products \cite{Sta93} (Corollary \ref{C:Stacey}).

A useful tool for the study of the endomorphism $\al \colon L^\infty(X,\mu) \to L^\infty(X,\mu)$ is the \emph{intertwining Hilbert module $\sE(X,\mu)$} introduced in section~4. Under certain conditions on the transformation $\vpi \colon X \to X$ there is a transfer operator; our setting encompasses several cases, including that described in \cite[Theorem 5.2]{CouMuhSch10}. We conclude by showing that the existence of a basis for the Hilbert module $\sE(X,\mu)$ is equivalent to the existence of a Cuntz family implementing $\al \colon L^\infty(X,\mu) \to L^\infty(X,\mu)$, and in turn is equivalent to the existence of a basis for $L^{\infty}(X, \mu)$ viewed as a Hilbert module, where the inner product is defined by the transfer operator (Theorem~\ref{T:Hilbertmodules}).

Hilbert modules may not have a well defined (up to unitary equivalence) basis, in contrast to Hilbert spaces.
Therefore it is central for (and non-trivial in) our analysis to achieve a well defined basis.
For example $\sO_2$ is unitarily equivalent to $\sum_{k=1}^n \sO_2$ for all $n \in \bbN$, as Hilbert modules over $\sO_2$ (Remark \ref{R:notuniquecard}).
This phenomenon is also connected to the multiplicity of multivariable C*-dynamics \cite{KakKat12} and produces a fascinating obstacle (so far) for the classification of these objects.
For encountering this problem, Gipson \cite{Gip14} develops the notion of the invariant basis number for C*-algebras, along with an in-depth analysis of C*-algebras that do (or do not) attain such a number.

\section{Preliminaries}

Let us begin with a general comment on $*$-endomorphisms $\al_S$ of $\sB(H)$ that are implemented by a Cuntz family $\{S_1, \dots, S_N\}$, i.e.,
\[
\al_S(T)=\sum_{i=1}^N S_iTS_i^*, \foral T \in \sB(H).
\]
We write $\sO_N = \ca(S_1, \dots, S_N)$ for the Cuntz algebra \cite{Cun77} inside $\sB(H)$. Both $\al_S$ and the restriction $\al_S|_{\sO_N}$ of $\al_S$ to $\sO_N$ are injective, but they are not onto for $N >1$. Indeed, if there is a $T \in \sB(H)$ such that $\al_S(T) =0$, then
\[
T = S_1^*S_1 T S_1^* S_1 = S_1^* \al_S(T) S_1 = 0.
\]
Furthermore if there is a $T \in \sO_N$ such that $\al_S(T) = S_1$, then
\[
I = S_1^*S_1 = S_1^* \al_S(T) = TS_1^*,
\]
hence $S_1$ is a unitary, which holds if and only if $N=1$.

Let $(X,\mu)$ and $(Y,\nu)$ be compact, Hausdorff measure spaces, endowed with their Borel structure. Then a continuous map $\vpi\colon (X,\mu) \to (Y,\nu)$ is a Borel homomorphism. However the mapping
\[
\al \colon (L^{\infty}(Y,\nu), \nor{\cdot}_\infty) \to (L^{\infty}(X,\mu), \nor{\cdot}_\infty): f \mapsto f\circ \vpi
\]
where $\nor{\cdot}_\infty$ is the essential sup-norm, may not even be well defined. In particular one can show that $\al$ is well-defined if and only if $\mu \circ \vpi^{-1} \ll \nu$ (i.e., $\vpi^{-1}$ \emph{preserves the $\nu$-null sets}). When $\vpi(Y)$ is in addition a Borel set, then $\al$ is well-defined and injective if and only if $\nu(\vpi(Y)^c)=0$ (i.e., $\vpi$ is \emph{almost onto $X$}) and $\mu \circ \vpi^{-1} \sim \nu$.

In general a Borel map $\vpi \colon X \to Y$ is said to preserve the $\nu$-null sets if $\nu\circ \vpi \ll \mu$. In this case $\nu \ll \mu \circ \vpi^{-1}$ and $\vpi(E)$ is Borel for every Borel subset $E$ of $X$. Indeed for the latter, observe that a Borel subset $E$ of $X$ is the union of an $F_\si$ set $A$ and a $\mu$-null set $N$. Then $\vpi(N)$ is a $\nu$-null set and compactness of $X$ implies that $A$ is $\si$-compact hence $\vpi(A)$ is Borel; thus $\vpi(E)$ is measurable.

Recall that if $\vpi\colon X \to Y$ is a Borel map, then a mapping $\psi\colon \vpi(X) \to X$ is called \emph{a Borel $($cross$)$ section of $\vpi$} if $\psi$ is a Borel map and $\vpi\circ \psi = \id_{\vpi(X)}$.

\begin{proposition}\label{P:isometry}
Let $\vpi\colon X \to Y$ be an onto map, such that $\vpi$ and $\vpi^{-1}$ preserve the null sets, and let $\psi \colon Y \to X$ be a Borel section of $\vpi$. Then $X_0:=\psi(Y)$ is Borel and there is an isometry $S \colon L^2(Y,\nu) \to L^2(X,\mu)$ such that
\[
M_{f\circ \vpi}|_{L^2(X_0,\nu|_{X_0})} = S M_f S^* \foral f\in L^\infty(Y,\nu).
\]
\end{proposition}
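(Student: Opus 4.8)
The plan is to build $S$ as a weighted composition operator $f\mapsto (f\circ\vpi)\sqrt{w}$ and to identify its range with the $L^2$-space living on $X_0$, where $\nu|_{X_0}$ is understood as the push-forward $\nu_0:=\nu\circ\psi^{-1}$.

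First I would check that $X_0=\psi(Y)$ is Borel. Since $\vpi\circ\psi=\id_Y$, the section $\psi$ is injective and $\psi(\vpi(x))=x$ holds exactly when $x\in X_0$; thus $X_0$ is the equalizer $\{x\in X:(\psi\circ\vpi)(x)=x\}$ of the two Borel maps $\psi\circ\vpi$ and $\id_X$, and hence Borel (alternatively, an injective Borel map has Borel range). I then transport $\nu$ to $X$ by setting $\nu_0(E)=\nu(\psi^{-1}(E))$ for Borel $E\subseteq X$; this is a finite Borel measure concentrated on $X_0$.

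The core step is to embed $L^2(X_0,\nu_0)$ isometrically into $L^2(X,\mu)$. I would first record that for $E\subseteq X$ one has $\psi^{-1}(E)=\vpi(E\cap X_0)$, so that $\nu_0(E)=\nu(\vpi(E\cap X_0))$; since $\vpi$ preserves the null sets, $\mu(E)=0$ forces $\nu_0(E)=0$, i.e.\ $\nu_0\ll\mu$. Writing $w=d\nu_0/d\mu\ge 0$ (essentially supported on $X_0$), I define $Sf=(f\circ\vpi)\sqrt{w}$. This is well defined on $L^2$-classes precisely because $\vpi^{-1}$ preserves the null sets, so that $f\mapsto f\circ\vpi$ respects $\mu$-a.e.\ equality, and the change-of-variables formula for the push-forward gives
\[
\nor{Sf}_{L^2(X,\mu)}^2=\int_X|f\circ\vpi|^2\,d\nu_0=\int_Y|f|^2\,d\nu,
\]
so $S$ is an isometry onto a closed subspace of $L^2(X,\mu)$.

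It remains to verify the intertwining identity. Since $(fg)\circ\vpi=(f\circ\vpi)(g\circ\vpi)$, a one-line computation yields $SM_f=M_{f\circ\vpi}S$ for all $f\in L^\infty(Y,\nu)$, whence $SM_fS^*=M_{f\circ\vpi}\,SS^*$. As $\operatorname{ran}S$ is invariant under each $M_{f\circ\vpi}$, the right-hand side is the restriction of $M_{f\circ\vpi}$ to $\operatorname{ran}S$; factoring $S$ through the unitary $L^2(Y,\nu)\to L^2(X_0,\nu_0)$, $f\mapsto f\circ\vpi|_{X_0}$, followed by multiplication by $\sqrt{w}$, identifies $\operatorname{ran}S$ with $L^2(X_0,\nu|_{X_0})$ and turns this restriction into multiplication by $f\circ\vpi|_{X_0}$, which is the claimed equality. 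I expect the delicate points to be exactly the two measure-theoretic checks—that $X_0$ is Borel and that $\nu_0\ll\mu$ so that $\sqrt{w}$ exists—and these are precisely where the hypotheses that $\vpi$ and $\vpi^{-1}$ preserve the null sets enter.
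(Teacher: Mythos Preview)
Your argument is correct and follows essentially the same strategy as the paper: build the isometry as a weighted composition operator, with the weight coming from a Radon--Nikodym derivative. The differences are cosmetic but worth noting. The paper works from the section side: it first shows that $\psi$ preserves null sets (hence $X_0=\psi(Y)$ is Borel via the general fact recorded just before the proposition), then proves $\nu\sim\mu|_{X_0}\circ\psi$, takes $u=d(\mu|_{X_0}\circ\psi)/d\nu$ on $Y$, and defines the \emph{adjoint} $S_0^*(g)=(g\circ\psi)\,u^{1/2}$ as a unitary $L^2(X_0,\mu|_{X_0})\to L^2(Y,\nu)$ before extending by zero. You work from the map side: you prove $X_0$ is Borel via the equalizer $\{x:\psi(\vpi(x))=x\}$, push $\nu$ forward to $\nu_0=\nu\circ\psi^{-1}$ on $X$, take $w=d\nu_0/d\mu$, and define $S$ directly by $Sf=(f\circ\vpi)\sqrt{w}$. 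The two constructions are dual---on $X_0$ one has $w=(u\circ\vpi)^{-1}$---and produce the same isometry; your equalizer argument for Borelness is a bit cleaner, while the paper's version makes the range identification $\operatorname{ran}S=L^2(X_0,\mu|_{X_0})$ immediate (note that the ``$\nu|_{X_0}$'' in the displayed formula of the statement is evidently meant to be $\mu|_{X_0}$, as the paper's own proof confirms).
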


\begin{proof}
Observe that $\psi$ preserves the null sets (which implies that $X_0$ is Borel). Since $\mu \circ \vpi^{-1} \ll \nu$ then $\mu \ll \nu \circ \vpi$. For a null set $E \subseteq Y$ we get that $\nu \circ \vpi(\psi(E)) = \nu(E) = 0$, thus $\mu\circ \psi(E) = 0$. Note that since $X_0$ is Borel then $\vpi|_{X_0}$ is a Borel isomorphism with $\psi \colon Y \to X_0$ as an inverse.

On the other hand if $\mu \circ \psi(E) = 0$ then $\nu(E) = \nu \circ \vpi (\psi(E))=0$ since $\vpi$ preserves the null sets. Therefore $\nu$ is equivalent to $\mu \circ \psi = \mu|_{X_0} \circ \psi$, and the Radon-Nykodim derivative $u={d(\mu|_{X_0}\circ \psi)}/{d\nu}$ is defined. It is a standard fact that the operator $S_0^*\colon L^2(X_0, \mu|_{X_0}) \to L^2(Y,\nu)$ defined by
\[
S_0^*(g) = g \circ \psi \cdot u^{1/2}, \foral g \in L^2(X_0,\mu|_{X_0}),
\]
is a unitary such that
\[
M_{f\circ \vpi}|_{L^2(X_0,\nu|_{X_0})} = S_0 M_f S_0^* \foral f\in L^\infty(Y,\nu).
\]
Extend $S_0^*$ trivially to $S^*$ on $L^2(X,\mu) = L^2(X_0, \mu|_{X_0}) \oplus L^2(X^c, \mu|_{X_0^c})$. Then the adjoint $S$ of $S^*$ is an isometry and gives the required equation.
\end{proof}

\begin{proposition}\label{P:decomposition}
Let $\vpi\colon X \to Y$ be an onto map, such that $\vpi$ and $\vpi^{-1}$ preserve the null sets. Suppose that there is a family $\{\psi_1,\dots,\psi_N\}$ of $N$ Borel sections of $\vpi$ such that $\psi_i(Y) \cap \psi_j(Y) = \emptyset$ for $i \neq j$, and $\cup_i \psi(Y)$ is almost equal to $X$. Then there is a Cuntz family that implements $\al$.
\end{proposition}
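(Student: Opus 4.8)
The plan is to build the Cuntz family one isometry at a time, applying Proposition~\ref{P:isometry} to each section $\psi_i$, and then to extract the Cuntz relations from the disjointness and covering properties of the sets $X_i:=\psi_i(Y)$.

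First I would fix $i\in\{1,\dots,N\}$ and note that the pair $(\vpi,\psi_i)$ satisfies all the hypotheses of Proposition~\ref{P:isometry}: the map $\vpi$ is onto and $\vpi,\vpi^{-1}$ preserve the null sets by assumption, while $\psi_i$ is a Borel section of $\vpi$. Hence I obtain an isometry $S_i\colon L^2(Y,\nu)\to L^2(X,\mu)$ whose range is the subspace $L^2(X_i,\mu|_{X_i})$ and which satisfies $M_{f\circ\vpi}|_{L^2(X_i)}=S_iM_fS_i^*$ for every $f\in L^\infty(Y,\nu)$. Writing $P_i:=S_iS_i^*$ for the orthogonal projection onto $\operatorname{Ran}S_i=L^2(X_i)$ and observing that $S_iM_fS_i^*$ is already supported on that range, this identity upgrades to the global operator identity $S_iM_fS_i^*=M_{f\circ\vpi}P_i$ on all of $L^2(X,\mu)$.

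Next I would convert the set-theoretic hypotheses into Hilbert-space geometry. Since $\psi_i(Y)\cap\psi_j(Y)=\emptyset$ for $i\neq j$, the subspaces $L^2(X_i)$ are pairwise orthogonal; as $S_j^*$ annihilates $\operatorname{Ran}(S_j)^\perp\supseteq L^2(X_i)$, this yields $S_j^*S_i=0$ for $i\neq j$, which together with the isometry relation $S_i^*S_i=I$ gives the Cuntz orthogonality relations $S_i^*S_j=\delta_{ij}I$. Because $\bigcup_i X_i$ is almost equal to $X$ and the pieces are essentially disjoint, one gets the orthogonal decomposition $L^2(X,\mu)=\bigoplus_{i=1}^N L^2(X_i,\mu|_{X_i})$, so that $\sum_{i=1}^N P_i=\sum_{i=1}^N S_iS_i^*=I$. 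Thus $\{S_i\}_{i=1}^N$ is a Cuntz family. Summing the identity from the previous step over $i$ and using $\sum_iP_i=I$ then gives $\sum_{i=1}^N S_iM_fS_i^*=M_{f\circ\vpi}\sum_{i=1}^N P_i=M_{f\circ\vpi}=\al(M_f)$, which is exactly the assertion that this Cuntz family implements $\al$.

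I expect the substantive work to be entirely inside Proposition~\ref{P:isometry}; what remains here is careful measure-theoretic bookkeeping. The main points to watch are that ``almost equal to $X$'' and the pairwise disjointness of the a priori merely Borel sets $\psi_i(Y)$ really do produce an orthogonal direct-sum decomposition of $L^2(X,\mu)$ modulo null sets, and that the relations provided by Proposition~\ref{P:isometry}, which are stated on the ranges $L^2(X_i)$, extend to honest operator identities on $L^2(X,\mu)$. One should also note the mild abuse in the term ``Cuntz family'': the $S_i$ map $L^2(Y,\nu)$ into $L^2(X,\mu)$, so strictly one obtains a family of isometries with orthogonal ranges summing to the identity, reducing to a genuine Cuntz family in $\sB(L^2(X,\mu))$ in the self-map case $Y=X$ that drives the applications.
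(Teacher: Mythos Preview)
Your proposal is correct and follows essentially the same route as the paper: apply Proposition~\ref{P:isometry} to each section to obtain isometries $S_i$ with range $L^2(X_i)$, use disjointness of the $X_i$ for the orthogonality relations, and use that $\bigcup_i X_i$ is almost all of $X$ to get $\sum_i S_iS_i^*=I$ and hence $\sum_i S_iM_fS_i^*=M_{f\circ\vpi}$. Your write-up is in fact a bit more explicit than the paper's (e.g.\ the remark that the global identity is $S_iM_fS_i^*=M_{f\circ\vpi}P_i$, and the observation about $Y\neq X$).
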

\begin{proof}
For every $i=1,\dots,N$ let $X_i = \psi_i(Y)$ and let $S_i$ be constructed as above on $\sX_i=L^2(X_i,\mu|_{X_i})$. Note that $\sX_i \perp \sX_j$ for $i\neq j$, thus if $X_0 = \cup_i X_i$ and $\sX_0 = \oplus_i \sX_i$ we get $M_{\mathbf{1}|_{X_i}} =M_{\al(\mathbf{1})}|_{\sX_i} = S_i M_\mathbf{1} S_i^* = S_iS_i^*$, therefore
\[
M_{\al(f)}|_{\sX_0} = \sum_{i=1}^N M_{\al(f)}|_{\sX_i} = \sum_{i=1}^N S_i M_f S_i^*.
\]
Since $X= \cup_i X_i$ a.e. we obtain that $I_{\sX_0}= M_{\mathbf{1}}|_{\sX_0} = \sum_{i=1}^N M_{\mathbf{1}}|_{\sX_i} = \sum_{i=1}^N S_iS_i^*$. Finally $X_0$ is almost equal to $X$, hence $L^2(X,\nu) = \sX_0$ and the proof is complete.
\end{proof}

\section{Ergodic Extensions}

Let $(X,\mu)$ be a probability measure space such that $X$ is a compact, Hausdorff space and $\mu$ is a regular Borel measure on $X$. Then a measure preserving map $\vpi \colon X \to X$ induces an injective $*$-homomorphism $\al \colon L^\infty(X,\mu) \to L^\infty(X,\mu)$. We are interested in the case where $\al$ is implemented by a Cuntz family $\{S_i\}_{i=1}^N$ in $\sB(L^2(X,\mu))$. In this case $\al$ extends to an injective $*$-endomorphism $\al_S$ of $\sB(L^2(X,\mu))$. A natural question is whether ergodicity (of the mapping) $\vpi$ implies ergodicity (of the $*$-endomorphism) $\al_S$. Recall that $\al_S$ is \emph{ergodic} if the von Neumann algebra $\sN_{\al_S} := \{T \in \sB(\sH) \mid \al_S(T) = T\}$ is trivial. We aim to give a positive answer for a class of ergodic mappings that includes central examples.

Recall that a map $\vpi\colon X \to X$ is called a \emph{local homeomorphism} if for every point $x \in X$ there is a neighborhood $U$ such that $\vpi|_U$ is a homeomorphism onto its image. Clearly, local homeomorphisms are continuous and open. We begin with a decomposition lemma that fits in our study.

\begin{lemma} \label{L:decomp}
Let $\vpi$ be a local homeomorphism of a compact Hausdorff space $X$ such that $|\vpi^{-1}(x)| = N > 1$ for all $x \in X.$ Then there exist pairwise disjoint open subsets $U_1, \dots, U_N$ such that
\begin{enumerate}
\item \label{I:1} $\vpi|_{U_i}$ is one-to-one for all $i=1, \dots, n$;
\item \label{I:2} $\vpi(U_i) = \vpi(U_j)$ for all $i,j=1, \dots N$;
\item \label{I:3} $X= \cup_{i=1}^N (U_i \cup \partial U_i)$;
\item \label{I:4} $X= \vpi(U_i) \cup \partial \vpi(U_i)$ for all $i=1, \dots, N$.
\end{enumerate}
Moreover, $\vpi(\partial U_i) \subseteq \partial\vpi(U_i)$ and $\vpi^{-1}(\partial \vpi(U_i)) =
\cup_{j=1}^N \partial U_j$, for all $i=1, \dots, N$.
\end{lemma}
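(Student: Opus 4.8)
The plan is to treat $\vpi$ as a finite covering map and to trivialize it globally away from a nowhere-dense ``branch locus'' that will carry the boundaries $\partial U_i$. First I would establish local triviality: fixing $y \in X$ and writing $\vpi^{-1}(y) = \{x_1,\dots,x_N\}$, Hausdorffness lets me separate the $x_k$ by pairwise disjoint open sets $O_1,\dots,O_N$ on each of which $\vpi$ is a homeomorphism onto an open set (here I use that local homeomorphisms are open). Setting $W=\bigcap_k \vpi(O_k)$ and $V_k = O_k \cap \vpi^{-1}(W)$ makes each $\vpi|_{V_k}\colon V_k \to W$ a homeomorphism, and counting preimages---there are exactly $N$ over each point of $W$, one per $V_k$---shows that $\vpi^{-1}(W)$ is the disjoint union of $V_1,\dots,V_N$. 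Hence every point of $X=\vpi(X)$ has such an evenly covered neighborhood.

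Next comes the combinatorial core, gluing local sheets into global ones by maximality. Let $\mathcal P$ be the set of pairs $(W,(U_1,\dots,U_N))$ with $W$ open, $\vpi^{-1}(W)$ the disjoint union of open sets $U_1,\dots,U_N$, and each $\vpi|_{U_i}\colon U_i\to W$ a homeomorphism, ordered by simultaneous inclusion of $W$ and of every $U_i$. The first step makes $\mathcal P$ nonempty, and unions along a chain stay in $\mathcal P$ (disjointness, injectivity of each $\vpi|_{U_i}$, and the identity $\vpi^{-1}(W)=\bigcup_i U_i$ all survive), so Zorn's lemma yields a maximal $(W,(U_1,\dots,U_N))$. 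I claim $\ol W = X$: otherwise $X\setminus W$ has nonempty interior, so by the first step there is an evenly covered neighborhood $W_0\subseteq X\setminus W$ with splitting $V_1,\dots,V_N$, and since $W\cap W_0=\emptyset$ I may adjoin $W_0$ to $W$ and each $V_i$ to $U_i$, enlarging the pair and contradicting maximality. Now items (1) and (2) are immediate since each $\vpi|_{U_i}$ is a homeomorphism onto the common set $\vpi(U_i)=W$, while (4) is just $\ol W = X$. For (3) I would use that the preimage of a dense set under a continuous open map is dense, so $\vpi^{-1}(W)=\bigcup_i U_i$ is dense, whence $\bigcup_i \ol{U_i}=X$.

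Finally I would settle the two ``moreover'' identities. For $\vpi(\partial U_i)\subseteq\partial\vpi(U_i)=\partial W$: continuity gives $\vpi(\ol{U_i})\subseteq\ol W$, and no $x\in\partial U_i$ can have $\vpi(x)\in W$, since that would place $x\in\vpi^{-1}(W)=\bigcup_j U_j$, which is impossible because $x\notin U_i$, while $x\in\ol{U_i}$ cannot lie in an open $U_j$ disjoint from $U_i$. This already gives $\bigcup_j\partial U_j\subseteq\vpi^{-1}(\partial W)$. For the reverse inclusion, take $x$ with $\vpi(x)\in\partial W$, so $x\notin\bigcup_j U_j$; choosing an evenly covered neighborhood $W_x$ of $\vpi(x)$ and the local section carrying $\vpi(x)$ to $x$, I would lift a net in $W_x\cap W$ converging to $\vpi(x)$, obtaining lifts in $\bigcup_j U_j$ converging to $x$, and passing to a subnet landing in a single $U_{j_0}$ (only finitely many sheets) gives $x\in\ol{U_{j_0}}\setminus U_{j_0}=\partial U_{j_0}$.

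I expect the main obstacle to be precisely this global gluing: since the covering may have nontrivial monodromy, the sheets cannot be made to surject onto all of $X$, and the correct substitute is a maximal \emph{dense} common image $W$, with the unavoidable cut recorded by the shared boundaries. The remaining boundary identifications are elementary but demand careful bookkeeping with nets and local sections.
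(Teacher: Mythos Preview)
Your proof is correct and follows essentially the same strategy as the paper: both apply Zorn's lemma to the collection of $N$-tuples of disjoint open sheets mapped homeomorphically onto a common open base, and then use maximality to force that base to be dense. The only notable difference is in the reverse inclusion $\vpi^{-1}(\partial W)\subseteq\bigcup_j\partial U_j$, where the paper bypasses your net-and-local-section argument by the direct computation $\vpi^{-1}(X\setminus W)=X\setminus\vpi^{-1}(W)=X\setminus\bigcup_j U_j=\bigcup_j\partial U_j$.
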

\begin{proof}
First let us construct a family that satisfies (1) and (2). Let $\sF$ be the collection that consists of $\{U_1,\dots, U_N\}$ such that $U_i$ are open, disjoint, $\vpi|_{U_i}$ is one-to-one for all $i=1,\dots,N$, and $\vpi(U_i)=\vpi(U_j)$.

\smallskip

\noindent \textit{Claim.} The collection $\sF$ is non-empty.

\noindent \textit{Proof of Claim.} Let a $y\in X$ and suppose that $x_1, \dots, x_N$ are the $N$ pre-images of $y$. Let $V_i$ be a neighborhood of $x_i$ such that $\vpi|_{V_i}$ is one-to-one. Since $X$ is a Hausdorff space we can choose $V_i$ be disjoint. Moreover $\vpi(U_i)$ are open sets, since $\vpi$ is an open map. Let $V= \cap_{i=1}^N \vpi(V_i)$ which is open and let $U_i = \vpi^{-1}(V) \cap V_i$. Then the $U_i$ are disjoint and $\vpi|_{U_i}$ is one-to-one, since the $U_i$ are subsets of the $V_i$. In addition
\begin{align*}
\vpi(U_i)
& =
\vpi\circ \vpi^{-1}(V) \cap \vpi(V_i)
 =
V \cap \vpi(V_i)
 =
V,
\end{align*}
and the proof of the claim is complete.

\smallskip

The collection $\sF$ is endowed with the partial order ``$\leq$'' such that
\[
\{U_1, \dots, U_N\} \leq \{V_1, \dots , V_N\} \, \text{ if } \, U_i \subseteq V_i, \foral i=1, \dots, N,
\]
after perhaps a re-ordering. Let $\fC= \left\{ \{U_1^k,\dots, U_N^k\} \mid k\in I \right\}$, be a chain in $\sF$, with the understanding that when $\{U_1^k,\dots, U_N^k\} \leq \{U_1^l,\dots, U_N^l\}$ then $U_i^k \subseteq U_i^{l}$ for all $i=1,\dots, N$. Then the element $\{ \cup_k U_1^k, \dots, \cup_k U_N^k\}$ is an upper bound for $\fC$ inside $\sF$. Indeed, what suffices to prove is that the $\cup_k U_i^k$ are disjoint (with respect to the indices $i$). If there were an $x$ in two such unions then there would be some $k,l \in I$ such that $x \in U_i^k \cap U_j^l$. Without loss of generality assume that $\{U_1^k,\dots, U_N^k\} \leq \{U_1^l,\dots, U_N^l\}$ therefore $x\in U_i^k \cap U_j^l \subseteq U_i^l \cap U_j^l=\emptyset$ which is absurd. Then the collection $\sF$ has a maximal element by Zorn's Lemma. From now on fix this maximal element be $\{U_1, \dots, U_N\}$. By definition the sets $U_1, \dots, U_N$ satisfy the properties (\ref{I:1}) and (\ref{I:2}) of the statement.

Secondly we prove that $X= \vpi(U_i) \cup \partial \vpi(U_i)$, for all $i=1, \dots, N$, where $\{U_1, \dots,U_N\}$ is the maximal family constructed above. Since $X \setminus \vpi(U_i)$ is closed it suffices to show that it has empty interior. To this end let $V$ be an open neighborhood of some $y \in \inte(X \setminus \vpi(U_i))$ with $N$ pre-images $x_1, \dots, x_N$. Then $\vpi^{-1}(V)$ is open, contains the $x_i$ and $\vpi^{-1}(V) \cap (\cup_{i=1}^N U_i) = \emptyset$. Indeed, if there was a $z\in \vpi^{-1}(V) \cap (\cup_{i=1}^N U_i)$, then $\vpi(z) = V \cap \vpi(U_i) = \emptyset$, which is absurd. As in the proof of the claim above, we can find neighborhoods $V_i$ of $x_i$ inside $\vpi^{-1}(V)$ such that they are disjoint, $\vpi|_{V_i}$ is one-to-one and $\vpi(V_i) = V$, perhaps by passing to a sub-neighborhood of $y$. Therefore the family $\{ U_1 \cup V_1, \dots, U_N \cup V_N \}$ is in $\sF$, which contradicts to the maximality of $\{U_1, \dots, U_N\}$.

Thirdly, we show that $X = \cup_{i=1}^N (U_i \cup \partial U_i)$. It suffices to show that the closed set $X \setminus \left(\cup_{i=1}^N U_i \right)$ has empty interior. Indeed, in this case it will coincide with its boundary, hence with $\partial \left(\cup_{i=1}^N U_i\right)$. Since the $U_i$ are open and disjoint we get that this boundary will be $\cup_{i=1}^N \partial(U_i)$. To this end, let $U$ be an open neighborhood of an element $x$ in the interior of $X \setminus \left(\cup_{i=1}^N (U_i \cup \partial U_i) \right)$. If there were an $x' \in U$ such that $\vpi(x') \in \vpi(U_i)$, then $\vpi(x')$ would have $N+1$ pre-images which is a contradiction. Indeed, recall that $\vpi(U_i) = \vpi(U_j)$ and $U_i \cap U_j = \emptyset$. Therefore $\vpi(U)$ is contained in the interior of $X \setminus \vpi(U_i)$. But $X \setminus \vpi(U_i)$ has empty interior, which gives the contradiction.

Finally, let $x\in \partial U_i$. If $\vpi(x) \in \vpi(U_i)$ then the element $\vpi(x)$ would have $N+1$ pre-images, which is a contradiction. Therefore $\vpi(\partial U_i) \subseteq X \setminus \vpi(U_i)= \partial \vpi(U_i)$. Note also that by construction we obtain
\begin{align*}
\vpi^{-1}(\partial \vpi(U_i))
=
\vpi^{-1}(X \setminus \vpi(U_i))
=
X \setminus \cup_{j=1}^N U_j
=
\cup_{j=1}^N U_j,
\end{align*}
for all $i=1, \dots, N$, and the proof of the lemma is complete.
\end{proof}

Let $\vpi$ be as in Lemma \ref{L:decomp} such that $\vpi$ and $\vpi^{-1}$ preserve the null sets. If $\{U_i\}_{i=1}^N$ is the family satisfying the properties of Lemma \ref{L:decomp} and
 \begin{align*}
\partial U_i \text{ (equivalently } \vpi(\partial U_i) \text {) are null sets}
\end{align*}
then the $*$-endomorphism $\al \colon L^\infty(X,\mu) \to L^\infty(X,\mu): f \to f \circ \vpi$ is implemented by a Cuntz family. Indeed, let $X_0 = \cup_{i=1}^N U_i$ and $Y_0=\vpi(U_i)$. Then $\vpi_0:= \vpi|_{X_0}$ has $N$ Borel sections $\psi_i$, for $i=1,\dots,N$, with
\[
\psi_i = [ \vpi|_{U_i} ]^{-1} \colon Y_0 \to X_0.
\]
Moreover $\vpi_0$ and $\vpi_0^{-1}$ preserve the null sets. By Proposition \ref{P:decomposition} there is a Cuntz family $\{S_i\}$ with
\[
S_i\colon L^2(Y_0,\mu|_{Y_0}) \to L^2(X_0,\mu|_{X_0})
\]
that implements the representation
\[
L^\infty(X_0,\mu|_{X_0}) \ni f \mapsto f\circ \vpi_0 \in L^\infty(Y_0,\mu|_{Y_0}).
\]
Since $X_0$ and $Y_0$ are almost equal to $X$ then the family $\{S_i\}_{i=1}^N$ implements $\al$.

Given a decomposition of $X$ as above and a finite word $\pmb{i}=i_1\dots i_k$ in $\{1,\dots,N\}$ we can define the Borel sets
\[
U_{i_1i_2\dots i_k} = \{ x\in X \mid x\in U_{i_1}, \dots, \vpi^{k-1} \in U_{i_k}\}.
\]
This is extended to infinite words $\pmb{i}=i_1i_2\dots i_k \dots$ with the understanding that $U_{\pmb{i}} = \cap_k U_{i_1\dots i_k}$.

\begin{theorem}\label{T:ergodic}
Let $(X, \mu, \vpi)$ be a dynamical system such that:
\begin{enumerate}
\item $\vpi$ is a local homeomorphism of $X$ such that each point of $X$ has $N > 1$ pre-images;
\item $\{U_i\}_{i=1}^N$ is a decomposition of $X$ as in Lemma~\ref{L:decomp} such that the $\partial U_i$ are null sets;
\item $\vpi$ is ergodic and preserves the null sets;
\item the sets $U_{\pmb{i}}$, for $\pmb{i} \in \bbF_N^+$ generate the $\si$-algebra up to sets of measure zero.
\end{enumerate}
Then $\al \colon M_f \mapsto M_{f \circ \vpi}$ admits an extension $\al_S$ to $\sB(L^2(X, m))$ which is ergodic. Furthermore $\al_S$ defines (by restriction) an irreducible representation of $\sO_N$.
\end{theorem}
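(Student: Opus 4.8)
The plan is to combine the existence of the implementing Cuntz family with an analysis of the fixed-point algebra $\sN_{\al_S}$, and then to read off irreducibility from ergodicity. First, hypotheses (1), (2) and (3), together with the discussion following Lemma~\ref{L:decomp} and Proposition~\ref{P:decomposition}, guarantee that $\al$ is implemented by a Cuntz family $\{S_i\}_{i=1}^N$, so that $\al_S(T) = \sum_{i=1}^N S_i T S_i^*$ is a well-defined injective $*$-endomorphism of $\sB(L^2(X,\mu))$ extending $\al$. The crux of the argument is then to show that every $T \in \sN_{\al_S}$ lies in the MASA $L^\infty(X,\mu)$.

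Writing $S_{\pmb{i}} = S_{i_1}\cdots S_{i_k}$ for a word $\pmb{i}=i_1\cdots i_k$, I would first record the identity $S_{\pmb{i}} M_f S_{\pmb{i}}^* = M_{\mathbf{1}_{U_{\pmb{i}}}\cdot (f\circ\vpi^{k})}$, proved by induction on $k$ using $S_i M_f S_i^* = M_{\mathbf{1}_{U_i}\cdot(f\circ\vpi)}$ (which itself follows from $S_iS_i^* = M_{\mathbf{1}_{U_i}}$ and $\al_S(M_f)=M_{f\circ\vpi}$). Taking $f=\mathbf{1}$ gives $P_{\pmb{i}}:=S_{\pmb{i}}S_{\pmb{i}}^* = M_{\mathbf{1}_{U_{\pmb{i}}}}$, so the range projections of the $S_{\pmb{i}}$ are exactly the characteristic functions of the cylinder sets. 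Next, since $\al_S^k(T)=T$ gives $\sum_{|\pmb{i}|=k} S_{\pmb{i}} T S_{\pmb{i}}^* = T$, compressing by $S_{\pmb{i}}^*(\cdot)S_{\pmb{j}}$ for words $\pmb{i},\pmb{j}$ of equal length and invoking the Cuntz relations $S_{\pmb{i}}^*S_{\pmb{j}} = \de_{\pmb{i}\pmb{j}}I$ yields $S_{\pmb{i}}^* T S_{\pmb{j}} = \de_{\pmb{i}\pmb{j}}T$. A short manipulation then gives $P_{\pmb{i}} T = T P_{\pmb{i}} = P_{\pmb{i}} T P_{\pmb{i}}$, i.e.\ $T$ commutes with every cylinder projection $M_{\mathbf{1}_{U_{\pmb{i}}}}$.

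By hypothesis (4) the cylinder sets $U_{\pmb{i}}$, $\pmb{i}\in\bbF_N^+$, generate the Borel $\si$-algebra modulo null sets, so the projections $M_{\mathbf{1}_{U_{\pmb{i}}}}$ generate $L^\infty(X,\mu)$ as a von Neumann algebra. Hence $T$ commutes with all of $L^\infty(X,\mu)$, and since the latter is a MASA in $\sB(L^2(X,\mu))$ we conclude $T = M_g$ for some $g\in L^\infty(X,\mu)$. The equation $\al_S(T)=T$ now reads $M_{g\circ\vpi}=M_g$, that is $g\circ\vpi = g$ almost everywhere, and ergodicity of $\vpi$ forces $g$ to be constant; thus $\sN_{\al_S}=\bbC I$ and $\al_S$ is ergodic. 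Finally, if $T\in\sO_N'$ then $T$ commutes with each $S_i$ and $S_i^*$, whence $\al_S(T)=\sum_i S_i T S_i^* = T\sum_i S_iS_i^* = T$; therefore $\sO_N'\subseteq \sN_{\al_S}=\bbC I$, so the inclusion representation of $\sO_N$ is irreducible.

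I expect the main obstacle to be the bookkeeping of the second paragraph: correctly establishing $S_{\pmb{i}}^* T S_{\pmb{j}} = \de_{\pmb{i}\pmb{j}}T$ for words of equal length and converting it into genuine commutation of $T$ with the cylinder projections, together with the measure-theoretic point that a family generating the $\si$-algebra modulo null sets produces, via characteristic functions, a weakly dense $*$-subalgebra of $L^\infty(X,\mu)$. Everything else reduces either to the Cuntz relations or to the ergodicity hypothesis.
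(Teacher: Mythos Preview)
Your proof is correct and follows essentially the same line as the paper's: show that any fixed element of $\al_S$ commutes with the cylinder projections $S_{\pmb{i}}S_{\pmb{i}}^*=M_{\mathbf{1}_{U_{\pmb{i}}}}$, invoke hypothesis (4) to land in the MASA, and then use ergodicity of $\vpi$. The only cosmetic differences are that the paper restricts to projections $P\in\sN_{\al_S}$ (so that $\al_S(P)=P$ immediately yields $PS_i=S_iP$ and, by self-adjointness, $PS_i^*=S_i^*P$) rather than general $T$, and it cites \cite{Lac93} for irreducibility instead of giving your direct $\sO_N'\subseteq\sN_{\al_S}$ argument.
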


\begin{proof}
Under these assumptions there is a Cuntz family $\{S_i\}_{i=1}^N$ that implements $\al$. Let $\al_S(T) = \sum_{i=1}^N S_i T S_i^*$ be the extension of $\al$ to $\sB(L^2(X, \mu))$. Since $\al_S$ is a weak$*$-continuous endomorphism of $\sB(L^2(X, \mu))$, then $\sN_{\al_S} = \{T \in \sB(\sH) \mid \al_S(T) = T\} $ is a von Neumann algebra. Fix a projection $P \in \sN_{\al_S}$. Then $\al_S(P) = P$ implies that $S_i P = P S_i$, and $S_i^*  P = P S_i^*$, for all $i=1,\dots,N$. In particular $P$ commutes with the range projections of the $S_i$ and the products of the $S_i$. But these projections are the characteristic functions of the sets $U_{\pmb{i}}$, for the words $\pmb{i}$ on the symbols $\{1, \dots, N\}$. Since the sets $U_{\pmb{i}}$ generate the $\si$-algebra up to null sets, the linear span of these projections is weak*-dense in $L^{\infty}(X, \mu)$. It follows that $P$ is in the MASA $L^{\infty}(X, \mu)$, hence $P = \chi_E$ for a measurable set $E$. However
\[
M_{\chi_E} = P = \al_S(P) = \al(P) = M_{\chi_E \circ \vpi}
\]
and ergodicity of $\vpi$ implies that $E$ is either $X$ or $\emptyset$. Thus $\sN_{\al_S} = \bbC I$. The second part of the theorem follows by the comments after \cite[Definition 3.2]{Lac93}.
\end{proof}

We give examples of dynamical systems which satisfy the conditions of Theorem \ref{T:ergodic}.

\begin{examples} \label{E:erg syst}
The first example is the canonical Cuntz-Krieger example of a dynamical system associated with Cuntz isometries. Let $N \in \bbN$ and
\[
X = \Pi_{k=1}^{\infty} \{1, \dots, N\}_k \text{ with measure } \mu = \Pi_{k=1}^{\infty} \mu_k
\]
where each $\mu_k = \mu_j$ for all $j, k$ such that $\mu_k(A) = |A|/N$ for all $A \subset \{1, \dots, N\}$. If we consider $X$ as a compact abelian group, with ``odometer'' addition, then $\mu$ is the Haar measure on $X$.
Let $\vpi$ be the shift map $\vpi(i_1, i_2, \dots) = (i_2, i_3, \dots)$ which is a $N$-to-one local homeomorphism. Then $\vpi$ is ergodic and the conditions of the theorem are satisfied for the cylinder sets $U_i:= \{(i_1, i_2, \dots ) \mid i_1 = i\}$ (which are clopen so that $\partial{U_i} = \emptyset$).

A second example arises when $X$ is the circle $\bbT$, $\mu$ is Lebesgue measure, and $\vpi$ is a finite Blaschke product with $N > 1$ factors and zero Denjoy-Wolf fixed point (i.e., at least one of the Blaschke factors is $z$). Then $\vpi$ is ergodic and the sets $U_i$ are arcs on the circle, so the condition $\mu(\partial{U_i}) = 0$ is satisfied. This example is considered in~\cite{CouMuhSch10}.
\end{examples}

In view of Theorem \ref{T:ergodic} one can ask whether the $\si$-algebra generated by the sets $U_{\pmb{i}}$ with $\pmb{i} \in \bbF_N^+$ always generates the full $\si$-algebra of measurable sets, up to measure zero. This is not true, as the following example shows.

\begin{example} \label{E:erg not enough}
Let $(X,\mu,\vpi)$ be the canonical Cuntz-Krieger example as above. Also let $\tau$ be an irrational rotation on the circle $\bbT$ with Lebesgue measure. Set $Y = X \times \bbT$, $\si(x, z) = (\vpi(x), \tau(z))$ and $\nu = \mu \times \la$. Then $(Y, \nu, \si)$ is ergodic as the product of the mixing shift map with the ergodic irrational rotation. If $U_i=\{(i_1, i_2, \dots) \in X \mid i_1 = i \}$ let $V_i =  U_i \times [0, 1]$. Then the $V_i$ are as in Lemma \ref{L:decomp}, but the $V_{\pmb{i}}$ with $\pmb{i} \in \bbF_N^+$ do not suffice to generate the $\si$-algebra of measurable sets up to measure zero.
\end{example}

\section{Uniqueness of the extension}

The reader is referred to the work of Paschke \cite{Pas73} for an introduction to W*-modules and to \cite{Lan95, ManTro05} for the general theory of C*-modules.

\begin{definition} \label{D:basis}
Let $\sM$ be a Hilbert module over a unital C*-algebra $A$. A subset $\{\xi_1, \dots \xi_N\}$ of $\sM$ is said to be an \emph{orthonormal basis for $\sM$} if $\xi_i \in \sM$, $\sca{\xi_i, \xi_j} = \de_{ij} 1_A$ and
\[
\xi = \sum_{i=1}^N  \xi_i \cdot \sca{\xi_i, \xi}, \foral \xi \in \sM.
\]
In the case where $N=\infty$ the sum is understood as norm-convergent.
\end{definition}

As a consequence $\sum_{i=1}^N \theta_{\xi_i,\xi_i} = \id_\sM$ with the understanding that the sum is convergent in the strong topology when $N=\infty$. When $A$ is non-unital, we define the basis of $\sM$ by using the unitization $A^1 = A + \bbC$. Indeed we can extend the right action to $A^1$ by
\[
\xi \cdot (a + \la) = \xi\cdot a + \la\xi,
\]
for all $a\in A$ and $\la \in \bbC$. Then the basis of $\sM$ over $A$ is defined as the basis of $\sM$ over $A^1$. This is just to ensure that the formula $\sca{\xi_i,\xi_j} = \de_{ij} 1_{A^1}$ makes sense.

\begin{remark} \label{R:notuniquecard}
In general, a Hilbert module may not have an orthonormal basis. However, W*-modules have a basis $\{\xi_i\}$ such that $\sca{\xi_i,\xi_i}$ is a projection \cite[Theorem 3.12]{Pas73}. Moreover, the size of an orthonormal basis is not well defined, meaning that there may be bases $\{s_i\}_{i\in I}$ and $\{t_j\}_{j\in J}$ with $|I| \neq |J|$. The reason is that the uniqueness of the linear combinations is not guaranteed. For a counterexample let $\sM=\sO_2$ be the trivial Hilbert module over itself, where $\sO_2$ is the Cuntz algebra on two generators, say $s_1$ and $s_2$. Then the sets $\{1_{\sO_2}\}$ and $\{s_1, s_2\}$ are both bases for the Hilbert module. Indeed for $\xi \in \sO_2$ we trivially have that $\xi = 1_{\sO_2} \cdot \sca{1_{\sO_2}, \xi}$, and that
\[
\xi = (s_1s_1^* + s_2s_2^*)\xi = s_1 \cdot \sca{s_1, \xi} + s_2 \cdot \sca{s_2, \xi},
\]
since $s_1s_1^* + s_2s_2^* = 1_{\sO_2}$.

Similarly, one can show that the trivial Hilbert module $\sM = \sO_2$ over $\sO_2$ is unitarily equivalent to the (interior) direct sum $\sM + \sM$ over $\sO_2$ by the unitary $U = \begin{bmatrix} s_1 & s_2 \end{bmatrix}$. Inductively we get that $\sM$ is unitarily equivalent to $\sum_{k=1}^n \sM$ for all $n \in \bbN$.
\end{remark}

\begin{remark} \label{R:uniquecard}
Nevertheless, when the Hilbert module is over a stably finite C*-algebra $A$ then the size is unique. Indeed, let $\{\xi_i\}_{i \in I}$ and $\{\eta_j\}_{j \in J}$ be two orthonormal bases of such a Hilbert module $\sM$ and form the rectangular matrix $U=[\sca{\xi_i,\eta_j}]$. Then, the $(i,j)$-entry of the $|I|\times |J|$ matrix $UU^*$ is
\begin{align*}
\sum_{k=1}^{|J|} \sca{\xi_i,\eta_k} \sca{\eta_k, \xi_j}
 =
\sum_k \sca{\xi_i, \eta_k \sca{\eta_k,\xi_j}}
 =
\sca{\xi_i,\xi_j}
 =
\de_{ij} 1_A.
\end{align*}
Analogous computations for $U^*U$ show that $U$ is a unitary in $M_{|I|,|J|}(A)$. Since $A$ is stably finite we get that $|I|=|J|$. In fact we get the following formula
\[
\left[\eta_1, \dots, \eta_N \right]
=
\left[\xi_1, \dots, \xi_N \right]
\left[U_{ij}\right],
\]
and the unitary $U$ is  in $M_N(A)$. In contrast to \cite{Lac93} the unitary $U$ may not be in $M_N(\bbC)$.
\end{remark}

Let $\al \colon L^\infty(X,\mu) \to L^\infty(X,\mu)$ be a *-homomorphism and let the linear space
\[
\sE(X,\mu) = \{ T \in \sB(L^2(X,\mu)) \mid  Ta = \al(a) T, \text{ for all } a \in L^\infty(X,\mu)\} .
\]
Then $\sE(X,\mu)$ becomes a Hilbert module over $L^\infty(X,\mu)$ by defining
\begin{align*}
& S \cdot a := Sa  \quad \text{and} \quad \sca{S,T}:=S^*T
\end{align*}
for all $a\in L^\infty(X,\mu)$ and $S,T \in \sE(X,\mu)$. Indeed, for $b\in L^\infty(X,\mu)$ we obtain
\[
(Sa)b = Sab = Sba = (Sb)a=(\al(b)S)a = \al(b) (Sa),
\]
thus $Sa \in \sE(X,\mu)$. Also,
\[
\sca{S,T} \cdot b = (S^*T)b = S^*Tb= S^*\al(b) T = bS^*T = b \cdot \sca{S,T},
\]
for all $b\in L^\infty(X,\mu)$, which implies that $\sca{S,T} \in L^\infty(X,\mu)' = L^\infty(X,\mu)$. Thus the inner product and the right action are well defined and routine calculations show that $\sE(X,\mu)$ is a Hilbert module over $L^\infty(X,\mu)$. In particular the Hilbert module $\sE(X,\mu)$ becomes a W*-correspondence over $L^\infty(X,\mu)$ by defining
\begin{align*}
& a\cdot S = \al(a)S, \foral a\in L^\infty(X,\mu) \text{ and } S \in \sE(X,\mu).
\end{align*}
Indeed, for $b\in L^\infty(X,\mu)$ we obtain that
\[
(a\cdot S)b = \al(a)Sb= \al(a)\al(b) S = \al(b)\al(a)S= \al(b)(a\cdot S),
\]
hence $a\cdot S \in \sE(X,\mu)$.

It is evident that $\sE(X,\mu)$ is a weak*-closed subspace of $\sB(L^2(X,\mu))$. Hence as a self-dual W*-correspondence it receives a basis $\{S_i\}_{i \in \sI}$ such that $\sca{S_i,S_j}= S_i^*S_j = 0$ when $i \neq j$, $\sca{S_i,S_i}= S_i^*S_i$ is a projection in $L^\infty(X,\mu)$, and $T = \sum_i S_i S_i^*T$, for all $T \in \sE(X,\mu)$ \cite[Theorem 3.12]{Pas73}.

\begin{lemma} \label{L:Ebasis}
Let $\{S_i\}_{i=1}^n$ be a basis for $\sE(X,\mu)$ with $N < \infty$. Then the following are equivalent:
\begin{enumerate}
\item $\{S_i\}_{i=1}^N$ is an orthonormal basis for $\sE(X,\mu)$;
\item $\{S_i\}_{i=1}^N$ is a Cuntz family that implements $\al$ of $L^\infty(X,\mu)$.
\end{enumerate}
\end{lemma}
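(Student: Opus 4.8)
The plan is to prove the two implications separately, treating $(2)\Rightarrow(1)$ as the routine direction and isolating the completeness relation $\sum_{i=1}^N S_iS_i^* = I$ as the real content of $(1)\Rightarrow(2)$.

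For $(2)\Rightarrow(1)$ I would first check that each $S_i$ lies in $\sE(X,\mu)$: using the Cuntz relations $S_j^*S_i=\de_{ij}I$ one computes $\al(M_f)S_i=\sum_j S_jM_fS_j^*S_i=S_iM_f$, which is exactly the intertwining relation $S_iM_f=\al(M_f)S_i$ defining membership in $\sE(X,\mu)$. Orthonormality is then immediate, since $\sca{S_i,S_j}=S_i^*S_j=\de_{ij}I=\de_{ij}1_A$. Finally the reconstruction identity follows from the second Cuntz relation: for any $T\in\sE(X,\mu)$,
\[
\sum_{i=1}^N S_i\sca{S_i,T}=\sum_{i=1}^N S_iS_i^*T=\Big(\sum_{i=1}^N S_iS_i^*\Big)T=T,
\]
so $\{S_i\}_{i=1}^N$ is an orthonormal basis.

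For $(1)\Rightarrow(2)$ the orthonormality condition $\sca{S_i,S_j}=S_i^*S_j=\de_{ij}I$ already gives one half of the Cuntz relations, namely that the $S_i$ are isometries with pairwise orthogonal ranges. Writing $P=\sum_{i=1}^N S_iS_i^*$ for the resulting finite range projection, I would next record the two identities available from $S_i\in\sE(X,\mu)$, namely $S_iM_f=\al(M_f)S_i$ and, by taking adjoints, $M_fS_i^*=S_i^*\al(M_f)$. These give at once
\[
\sum_{i=1}^N S_iM_fS_i^*=\al(M_f)\sum_{i=1}^N S_iS_i^*=\al(M_f)P,
\]
so that, once $P=I$ is known, the family $\{S_i\}_{i=1}^N$ both satisfies the remaining Cuntz relation and implements $\al$ via $\al(M_f)=\sum_i S_iM_fS_i^*$.

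The main obstacle is therefore the completeness relation $P=I$. The reconstruction formula only yields $PT=T$ for every $T\in\sE(X,\mu)$, that is, $P$ is the projection onto $\ol{\spn}\{\,T\xi : T\in\sE(X,\mu),\ \xi\in L^2(X,\mu)\,\}$ and acts as the identity on $\sE(X,\mu)$; this does not by itself force $P=I$ on all of $L^2(X,\mu)$, and it is precisely the ``trivial basis'' phenomenon of Remark~\ref{R:notuniquecard} that warns against expecting it for free. To close the gap I would use that under the running hypotheses $\sE(X,\mu)$ already contains a Cuntz family $\{T_j\}_{j=1}^N$ implementing $\al$ (the one produced from the decomposition of $X$), whose ranges exhaust $L^2(X,\mu)$ since $\sum_j T_jT_j^*=I$. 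Applying the reconstruction identity to each $T_j$ gives $T_j=PT_j$, hence $T_jT_j^*=PT_jT_j^*P$, and summing over $j$ yields $I=\sum_j T_jT_j^*=P\big(\sum_j T_jT_j^*\big)P=PIP=P$, forcing $P=I$. Alternatively, and without invoking an auxiliary family, one may argue directly that $\operatorname{Range}(I-P)=\bigcap_{T\in\sE(X,\mu)}\ker T^*$ is a closed subspace reducing $\al(L^\infty(X,\mu))$ and show it is null using that $\al$ is unital together with the covering structure of the $N$ disjoint isometric ranges in the probability space. I expect this passage—upgrading $PT=T$ on $\sE(X,\mu)$ to $P=I$ on $L^2(X,\mu)$—to be where the genuine care is required.
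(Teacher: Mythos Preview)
Your treatment of $(2)\Rightarrow(1)$ matches the paper's, and is in fact a bit more explicit about why $S_i\in\sE(X,\mu)$.

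For $(1)\Rightarrow(2)$ you are more careful than the paper at exactly the point you flag. The paper's entire argument for this direction is the line ``Since $\{S_i\}_{i=1}^N$ is a basis we obtain $I=\sum_{i=1}^N\theta_{S_i,S_i}=\sum_i S_iS_i^*$,'' after which $\sum_i S_iaS_i^*=\al(a)\sum_i S_iS_i^*=\al(a)$ follows from the intertwining relation. In other words, the paper reads the module identity $\id_{\sE(X,\mu)}=\sum_i\theta_{S_i,S_i}$ directly as the operator identity $\sum_i S_iS_i^*=I$ in $\sB(L^2(X,\mu))$ and does not supply the passage from $PT=T$ for all $T\in\sE(X,\mu)$ to $P=I$ on $L^2(X,\mu)$ that you isolate. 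So you have not missed a trick in the paper; you have spotted a step the paper takes for granted.

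That said, neither of your proposed patches closes the gap within the hypotheses of the lemma as stated. Your first fix imports an auxiliary Cuntz family $\{T_j\}$ ``produced from the decomposition of $X$,'' but no such decomposition is assumed here---the lemma is formulated for an arbitrary $*$-homomorphism $\al$ of $L^\infty(X,\mu)$---and in the principal application (Theorem~\ref{T:Hilbertmodules}) the existence of an implementing Cuntz family is exactly item (3), so invoking one to prove $(1)\Rightarrow(2)$ would be circular there. Your second fix is not detailed enough to assess: the phrase ``covering structure of the $N$ disjoint isometric ranges'' seems to refer to the $S_iS_i^*$ themselves, which sum to $P$ rather than $I$, so this does not obviously yield anything new. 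In short, you have correctly located the sensitive step and your write-up is otherwise clean; the paper simply asserts $P=I$ at that point, and if you want a self-contained argument in the generality stated you will need something beyond either of your two sketches.
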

\begin{proof}
For convenience we write $I \in \sB(L^2(X,\mu))$ also for the unit of $L^\infty(X,\mu)$. Since $\{S_i\}_{i=1}^N$ is a basis we obtain $I = \sum_{i=1}^N \theta_{S_i, S_i} = S_iS_i^*$. Moreover $S_i \in \sE(X,\mu)$ thus $S_i a = \al(a) S_i$ for all $a\in L^\infty(X,\mu)$. Hence
\[
\sum_{i=1}^N S_i a S_i^* = \al(a) \sum_{i=1}^N S_iS_i^* = \al(a).
\]

On the other hand if $\{ S_i\}_{i=1}^N $ is a Cuntz family, then $\sca{S_i, S_j} = \de_{ij}I $ and $\sum_{i=1}^N S_i S_i^* = I$, since $\al(I) = I$ for the unit $I \in L^\infty(X,\mu)$. If $\al(a)=\sum_{i=1}^N S_i a S_i^*$, then $ S_i a = \al(a) S_i $ for all $a \in L^\infty(X,\mu)$, for $i=1, \dots,N$.  Thus, $S_i \in \sE(X,\mu)$. For $T \in \sE(X,\mu)$ set $a_i = \sca{S_i, T} = S_i^*T$. Then
\[
\sum_{i=1}^N S_i a_i   = \sum_{i=1}^N S_i S_i^* T = T,
\]
and the proof is complete.
\end{proof}

Let $\{S_1, \dots, S_N\}$ be an orthonormal basis of $\sE(X,\mu)$, and let the extension $\al_S$ of $\al$ be given by
\[
\al_{S}\colon \sB(L^2(X,\mu)) \to \sB(L^2(X,\mu)): R \mapsto \sum_{i=1}^N S_i R S_i^*.
\]
We can then define the linear space
\[
\sH_{S}:=\{ T \in \sB(L^2(X,\mu)) \mid T R = \al_{S}(R) T \foral R \in \sB(L^2(X,\mu))\}.
\]
It becomes a Hilbert space endowed with the inner product
\begin{align*}
& \sca{T_1,T_2} = T_1^*T_2, \foral T_1, T_2 \in \sH_S.
\end{align*}
Indeed it is easy to check that $\sca{T_1, T_2} \in \sB(L^2(X,\mu))' = \bbC$. Moreover it has dimension $N$ and the Cuntz family $\{S_i\}_{i=1}^N$ is in $\sH_{S}$. The proof is the same as in Remark \ref{R:notuniquecard} taking into account that $\al_S(R)S_j = S_jR$, for all $R\in \sB(L^2(X,\mu))$. These results were established by Laca \cite{Lac93}.

\begin{proposition}\label{P:pairing}
Let $\{S_i\}_{i=1}^N$ and $\{Q_i\}_{i=1}^N$ be two orthonormal bases for $\sE(X,\mu)$. Then the following are equivalent:
\begin{enumerate}
\item The unitary $U$ that induces a pairing of the bases is in $M_N(\bbC)$;
\item The extensions $\al_{S}$ and $\al_{Q}$ in $\sB(L^2(X,\mu))$ coincide.
\end{enumerate}
\end{proposition}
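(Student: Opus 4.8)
The plan is to prove the two implications separately, using in both directions the pairing described in Remark~\ref{R:uniquecard}. By Lemma~\ref{L:Ebasis} both $\{S_i\}_{i=1}^N$ and $\{Q_i\}_{i=1}^N$ are Cuntz families implementing $\al$, so the extensions $\al_S$ and $\al_Q$ of the stated form are well defined, and the pairing unitary of Remark~\ref{R:uniquecard} is $U=[\sca{S_i,Q_j}]=[S_i^*Q_j]\in M_N(L^\infty(X,\mu))$, satisfying $Q_j=\sum_{i=1}^N S_i\cdot\sca{S_i,Q_j}=\sum_{i=1}^N S_iU_{ij}$. The statement to be proved is exactly that the entries of $U$ are scalars precisely when $\al_S=\al_Q$.

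For the implication (1)$\Rightarrow$(2) I would write $Q_j=\sum_i u_{ij}S_i$ with $u_{ij}\in\bbC$ and substitute directly into $\al_Q$, obtaining
\[
\al_Q(R)=\sum_{j=1}^N Q_j R Q_j^* = \sum_{i,k}\Big(\sum_{j=1}^N u_{ij}\overline{u_{kj}}\Big)S_i R S_k^*.
\]
The crucial point is that the $u_{ij}$ are scalars, so they commute past $R$ and recombine as the $(i,k)$-entry of $UU^*$; since $U$ is unitary in $M_N(\bbC)$ this entry is $\de_{ik}$, leaving $\al_Q(R)=\sum_i S_i R S_i^*=\al_S(R)$. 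If the $u_{ij}$ were genuine elements of $L^\infty(X,\mu)$ they would not in general commute with $R\in\sB(L^2(X,\mu))$ and the cancellation would fail, so this is precisely where scalarity is used.

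For the implication (2)$\Rightarrow$(1) I would exploit the Hilbert space $\sH_S$ introduced just before the proposition. Since $\al_S=\al_Q$ as maps on $\sB(L^2(X,\mu))$, the spaces $\sH_S$ and $\sH_Q$ coincide by their very definition. As noted there, each $S_i$ lies in $\sH_S$ because $\al_S(R)S_i=S_iR$ for all $R$, and likewise each $Q_j$ lies in $\sH_Q=\sH_S$. Thus both Cuntz families sit inside the single $N$-dimensional Hilbert space $\sH_S$, whose inner product $\sca{T_1,T_2}=T_1^*T_2$ takes values in $\sB(L^2(X,\mu))'=\bbC$. Consequently $U_{ij}=S_i^*Q_j=\sca{S_i,Q_j}_{\sH_S}\in\bbC$, that is, $U\in M_N(\bbC)$.

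The main obstacle is really the conceptual step in (2)$\Rightarrow$(1): recognising that the module-valued entries $S_i^*Q_j\in L^\infty(X,\mu)$ must collapse to scalars. The mechanism is that the \emph{same} operators $S_i^*Q_j$ serve as the entries of the Hilbert-space Gram pairing once both families are placed inside $\sH_S=\sH_Q$, and there the inner product is forced to be scalar by $\sB(L^2(X,\mu))'=\bbC$. Ensuring that the hypothesis $\al_S=\al_Q$ is exactly what puts $\{Q_j\}$ into $\sH_S$ is the only place the assumption is genuinely used; the forward direction is a bookkeeping computation whose sole subtlety is the commutation noted above.
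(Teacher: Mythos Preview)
Your proposal is correct and follows essentially the same approach as the paper. For $(1)\Rightarrow(2)$ you substitute $Q_j=\sum_i u_{ij}S_i$ and use scalarity of $u_{ij}$ to commute past $R$ and collapse via $UU^*=I$, exactly as the paper does; for $(2)\Rightarrow(1)$ you observe $\sH_S=\sH_Q$ and use that the inner product $S_i^*Q_j$ in $\sH_S$ lands in $\sB(L^2(X,\mu))'=\bbC$, which is the paper's argument phrased conceptually rather than as the explicit commutation computation $S_i^*Q_kR=S_i^*\al_Q(R)Q_k=S_i^*\al_S(R)Q_k=RS_i^*Q_k$.
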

\begin{proof}
For convenience we write $I \in \sB(L^2(X,\mu))$ also for the unit of $L^\infty(X,\mu)$.

\noindent $[(1) \Rightarrow (2)]$: We compute
\begin{align*}
\al_{Q}(R)
& =
\sum_{i=1}^N Q_i R Q_i^*
 =
\sum_{i,j,k=1}^N S_j\sca{S_j,Q_i} R \sca{Q_i,S_k} S_k^* \\
& =
\sum_{k,j=1}^N S_j R \sum_{i=1}^N \sca{S_j,Q_i}\sca{Q_i,S_k}S_k^*\\
& =
\sum_{k,j=1}^N S_j R \de_{j,k} S_k^*
 =
\sum_{k=1}^N S_k R S_k^*
 =
\al_{S}(R),
\end{align*}
since $\sum_{i=1}^N \sca{S_j,Q_i}\sca{Q_i,S_k}$ is the $(j,k)$-entry of $UU^*=I$, and we have used that the entry $\sca{S_j,Q_i}$ of $U$ is in $\bbC$.

\noindent $[(2) \Rightarrow (1)]$: If $\al_{Q} = \al_{S}$ then by definition $\sH_{S}= \sH_{Q}$. Thus
\[
S^*_iQ_k R = S^*_i \al_{Q}(R) Q_k = S^*_i \al_{S}(R) Q_k = R S^*_iQ_k
\]
for all $R\in \sB(L^2(X,\mu))$, hence $S^*_iQ_k \in \sB(L^2(X,\mu))'= \bbC$.
\end{proof}

As a consequence we have a complete invariant for the multiplicity $n$ crossed products on $L^\infty(X,\mu)$ \cite{Sta93}. Recall that given a $*$-endomorphism $\al \colon A \to A$ of a C*-algebra $A$ then \emph{the multiplicity $n$ crossed product $A \times_\al^n \bbN$} is the enveloping C*-algebra generated by $\pi(A)$ and a Toeplitz-Cuntz family $\{Q_i\}_{i=1}^n$ such that $\pi$ is a non-degenerate representation of $A$, and $\pi(\al(a)) = \sum_{i=1}^n Q_i\pi(a)Q_i^*$, for all $a\in A$. When $\al$ is unital then non-degeneracy of $\pi$ is redundant and $\{Q_i\}_{i=1}^n$ can be considered to be a Cuntz family \cite[Section 3 and Proposition 3.1]{KakPet13}. In \cite[Subsection 3.3]{KakPet13} we introduced the semicrossed product $A \times_\al \sT_n^+$ as the non-involutive subalgebra of $A \times_\al^n\bbN$ generated by $\pi(A)$ and $\{Q_i\}_{i=1}^n$.

\begin{corollary}\label{C:Stacey}
Let $\al$ be a unital weak*-continuous isometric endomorphism of $L^\infty(X,\mu)$ and suppose that there is a representation $(\id,\{S_i\}_{i=1}^n)$ of Stacey's crossed product $L^\infty(X,\mu) \times_\al^n \bbN$ on $L^2(X,\mu)$. Then the following are equivalent
\begin{enumerate}
\item $L^\infty(X,\mu) \times_\al^n \bbN \simeq L^\infty(X,\mu) \times_\al^m \bbN$ via a $*$-isomorphism that fixes $L^\infty(X,\mu)$ elementwise;
\item There is a representation $(\id,\{Q_i\}_{i=1}^m)$ of $L^\infty(X,\mu) \times_\al^m \bbN$ acting on $L^2(X,\mu)$;
\item $n=m$;
\item $L^\infty(X,\mu) \times_\al \sT_n^+ \simeq L^\infty(X,\mu) \times_\al \sT_m^+$ via a completely isometric isomorphism that fixes $L^\infty(X,\mu)$ elementwise.
\end{enumerate}
\end{corollary}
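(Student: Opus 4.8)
The plan is to prove the four statements equivalent by establishing the cycle $(3) \Rightarrow (4) \Rightarrow (1) \Rightarrow (2) \Rightarrow (3)$. The implications emanating from $(3)$ are immediate: if $n=m$ then $L^\infty(X,\mu) \times_\al \sT_n^+$ and $L^\infty(X,\mu) \times_\al \sT_m^+$ are literally the same operator algebra, and the identity map is a completely isometric isomorphism fixing $L^\infty(X,\mu)$ elementwise; this gives $(3) \Rightarrow (4)$ (and equally $(3) \Rightarrow (1)$ and $(3) \Rightarrow (2)$, should a different cycle be preferred).

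For $(1) \Rightarrow (2)$ I would exploit the representation that comes with the hypotheses. Write $\rho_S \colon L^\infty(X,\mu) \times_\al^n \bbN \to \sB(L^2(X,\mu))$ for the $*$-representation determined by $(\id,\{S_i\}_{i=1}^n)$, so that $\rho_S$ restricts to the identity on $L^\infty(X,\mu)$ and carries the universal Cuntz family to $\{S_i\}_{i=1}^n$. Given a $*$-isomorphism $\Phi$ as in $(1)$, I would consider $\rho_S \circ \Phi^{-1} \colon L^\infty(X,\mu) \times_\al^m \bbN \to \sB(L^2(X,\mu))$. Since both $\Phi^{-1}$ and $\rho_S$ fix $L^\infty(X,\mu)$ elementwise, so does the composition; and applying it to the defining relation of the universal Toeplitz--Cuntz family shows that the images $\{Q_i\}_{i=1}^m$ form a Cuntz family in $\sB(L^2(X,\mu))$ implementing $\al$ (a genuine Cuntz family because $\al$ is unital). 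Hence $(\id,\{Q_i\}_{i=1}^m)$ is the required representation, which is exactly $(2)$.

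The step $(2) \Rightarrow (3)$ is where the Hilbert-module input is used, and it is essentially a bookkeeping of cardinalities. By hypothesis $\al$ is unital, so both $\{S_i\}_{i=1}^n$ and $\{Q_i\}_{i=1}^m$ are Cuntz families implementing $\al$; Lemma~\ref{L:Ebasis} then identifies each of them as an orthonormal basis of the W*-module $\sE(X,\mu)$ over $L^\infty(X,\mu)$. Because $L^\infty(X,\mu)$ is abelian it is stably finite, so Remark~\ref{R:uniquecard} forces the two bases to have equal cardinality, giving $n=m$.

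The main obstacle is $(4) \Rightarrow (1)$, which I would handle by passing to C*-envelopes. A completely isometric isomorphism of the operator algebras $L^\infty(X,\mu) \times_\al \sT_n^+$ and $L^\infty(X,\mu) \times_\al \sT_m^+$ extends, by the universal property of $\cenv$, to a $*$-isomorphism of their C*-envelopes that still restricts to the identity on the copy of $L^\infty(X,\mu)$. The genuine content is the identification of $\cenv\bigl(L^\infty(X,\mu) \times_\al \sT_n^+\bigr)$ with Stacey's crossed product $L^\infty(X,\mu) \times_\al^n \bbN$, which I would draw from \cite{KakPet13}; once this is in hand, the extended $*$-isomorphism is precisely the isomorphism required in $(1)$. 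I expect the delicate points to be checking that the C*-envelope computation of \cite{KakPet13} applies verbatim in the present unital, weak*-continuous, isometric setting, and that the canonical extension of the completely isometric map really does fix $L^\infty(X,\mu)$ pointwise rather than merely preserving it as a subalgebra.
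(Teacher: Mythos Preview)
Your proposal is correct and follows the same cycle $(3)\Rightarrow(4)\Rightarrow(1)\Rightarrow(2)\Rightarrow(3)$ as the paper, using the same tools at each step: the C*-envelope identification from \cite[Theorem 3.13]{KakPet13} for $(4)\Rightarrow(1)$, composition with the given representation for $(1)\Rightarrow(2)$, and Lemma~\ref{L:Ebasis} together with Remark~\ref{R:uniquecard} (stable finiteness of the abelian algebra $L^\infty(X,\mu)$) for $(2)\Rightarrow(3)$. Your write-up is in fact more careful than the paper's on the $(1)\Rightarrow(2)$ step, and the two concerns you flag at the end are the right ones, though neither turns out to be problematic.
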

\begin{proof}
The fact that $\al$ is an isometric endomorphism of a C*-algebra implies that it is a $*$-homomorphism of the C*-algebra $L^\infty(X,\mu)$ and the multiplicity $n$ crossed products are well defined. The implication $[(3) \Rightarrow (4)]$ is immediate.

\noindent $[(4)\Rightarrow (1)]$: By \cite[Theorem 3.13]{KakPet13} the C*-algebra $L^\infty(X,\mu) \times_\al^n \bbN$ is the C*-envelope of $L^\infty(X,\mu) \times_\al \sT_n^+$, thus the completely isometric isomorphism extends to a $*$-isomorphism of the corresponding C*-algebras.

\noindent $[(1) \Rightarrow (2)]$: If $\Phi$ is the $*$-isomorphism, let $Q_i := \Phi(S_i)$.

\noindent $[(2) \Rightarrow (3)]$: Let $(\id, \{S_i\}_{i=1}^n)$ and $(\id, \{Q_i\}_{i=1}^m)$ be two such representations. Then $\al$ is implemented by $\{S_i\}_{i=1}^n$ and $\{Q_i\}_{i=1}^m$, thus they define a basis for $\sE(X,\mu)$. Therefore $n=m$ by Remark \ref{R:uniquecard}.
\end{proof}

\section{Existence of a transfer operator}

In general the mapping $C(X) \ni f \stackrel{C_\vpi}{\longmapsto} f\circ \vpi \in L^2(X,\mu)$ may not extend to an operator on the Hilbert space $L^2(X,\mu)$. However if
\[
c_0 \nor{\xi}_2 \leq \nor{C_\vpi \xi}_2 \leq c_1 \nor{\xi}_2, \foral \xi \in L^2(X,\mu),
\]
then $C_\vpi$ is an injective operator in $\sB(L^2(X,\mu))$, and $\vpi^{-1}$ preserves the null sets. The map $\mu$ is called \emph{$\vpi$-bounded} if there is a constant $K > 0$ such that $\mu(\vpi(E)) \leq K \mu(E)$, for all measurable sets $E \subset X$. In this case $\vpi$ preserves also the $\mu$-null sets.

Under these assumptions let the polar decomposition $C_\vpi =S_\vpi a_\vpi$. Then $S_\vpi$ is an isometry and $a_\vpi$ is invertible. We can check that by definition $C_\vpi a = \al(a) C_\vpi$ for all $a \in L^\infty(X,\mu)$, hence $C_\vpi \in \sE(X,\mu)$. Hence $a_{\vpi}^2 = C_\vpi^* C_\vpi \in L^\infty(X,\mu)'$ so $a_{\vpi} \in L^\infty(X,\mu)$. Consequently, the isometry $S_\vpi = C_\vpi a_{\vpi}^{-1}$ is also in $\sE(X,\mu)$ and the mapping
\[
\sL\colon L^\infty(X,\mu) \rightarrow L^\infty(X,\mu): a \mapsto S_\vpi^* a S_\vpi,
\]
defines a \emph{transfer operator of $\al$}, i.e. $\sL$ is positive and $a\sL(b) = \sL(\al(a)b)$ for all $a,b \in L^\infty(X,\mu)$. Following Exel \cite{Exe03} let the semi-inner-product on the $L^\infty(X,\mu)$-module $L^\infty(X,\mu)_\sL$ given by
\begin{align*}
\sca{\eta,\xi}_\sL = \sL(\eta^*\xi), \text{ and } \xi \cdot a = \xi \al(a),
\end{align*}
for all $\eta,\xi, a \in L^\infty(X,\mu)$.

\begin{proposition}
Assume that $\mu$ is $\vpi$-bounded and that $C_\vpi$ is a bounded below operator of $\sB(L^2(X,\mu))$. Then $L^\infty(X,\mu)_\sL$ is a Hilbert module over $L^\infty(X,\mu)$, and as a vector space it coincides with $L^\infty(X,\mu)$.
\end{proposition}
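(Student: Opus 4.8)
\section*{Proof proposal}

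The plan is as follows. The discussion preceding the statement already equips $L^\infty(X,\mu)_\sL$ with a right $L^\infty(X,\mu)$-action and an $L^\infty(X,\mu)$-valued semi-inner product $\sca{\eta,\xi}_\sL=\sL(\eta^*\xi)$, and records that $\sL$ is positive; so the only points left to prove are that this semi-inner product is in fact \emph{definite} (so that the usual quotient by its kernel is trivial and the underlying vector space is all of $L^\infty(X,\mu)$) and that the resulting pre-Hilbert module is \emph{complete}. I would deduce both at once by showing that the module norm $\nor{\xi}_\sL:=\nor{\sca{\xi,\xi}_\sL}_\infty^{1/2}=\nor{\sL(\xi^*\xi)}_\infty^{1/2}$ is equivalent to the essential sup-norm on $L^\infty(X,\mu)$. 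Norm equivalence forces the kernel $\{\xi:\sca{\xi,\xi}_\sL=0\}$ to be trivial, identifying the vector space with $L^\infty(X,\mu)$, and, since $(L^\infty(X,\mu),\nor{\cdot}_\infty)$ is complete, it transports completeness to $(L^\infty(X,\mu)_\sL,\nor{\cdot}_\sL)$, which together with the (given) inner-product and module axioms finishes the proof.

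The upper estimate is immediate: writing $\sL(\xi^*\xi)=S_\vpi^*M_{|\xi|^2}S_\vpi=(M_{|\xi|}S_\vpi)^*(M_{|\xi|}S_\vpi)$ and using that $S_\vpi$ is an isometry gives $\nor{\sL(\xi^*\xi)}_\infty\le\nor{M_{|\xi|}}^2=\nor{\xi}_\infty^2$, i.e. $\nor{\xi}_\sL\le\nor{\xi}_\infty$. One already sees definiteness directly here: since $C_\vpi\mathbf{1}=\mathbf{1}\circ\vpi=\mathbf{1}$, the constant function lies in the range of $S_\vpi$, so $\sL(\xi^*\xi)=0$ pairs against $\mathbf{1}$ to give $\int_X|\xi|^2\,d\mu=0$, whence $\xi=0$. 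The substance of the proposition is therefore the lower estimate.

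For the lower bound I would first rewrite, using the polar decomposition $S_\vpi=C_\vpi a_\vpi^{-1}$ and the fact that $a_\vpi^{-1}$ and $C_\vpi^*M_{|\xi|^2}C_\vpi$ both lie in the abelian algebra $L^\infty(X,\mu)$ (the latter by the same reasoning that placed $a_\vpi$ in $L^\infty(X,\mu)$, namely $C_\vpi\in\sE(X,\mu)$),
\[
\sL(\xi^*\xi)=a_\vpi^{-1}\,C_\vpi^*M_{|\xi|^2}C_\vpi\,a_\vpi^{-1}=a_\vpi^{-2}\,C_\vpi^*M_{|\xi|^2}C_\vpi .
\]
Since $\nor{a_\vpi}=\nor{C_\vpi}\le c_1$ gives $a_\vpi^{-2}\ge c_1^{-2}I$, this yields $\nor{\sL(\xi^*\xi)}_\infty\ge c_1^{-2}\,\nor{C_\vpi^*M_{|\xi|^2}C_\vpi}_\infty$. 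Now $C_\vpi^*M_{|\xi|^2}C_\vpi=M_k$ is the multiplication operator determined by $\int_B k\,d\mu=\int_{\vpi^{-1}(B)}|\xi|^2\,d\mu$ for every Borel $B\subseteq X$, and the task reduces to bounding $\nor{k}_\infty$ below by a multiple of $\nor{\xi}_\infty^2$. This is precisely where the hypothesis that $\mu$ be $\vpi$-bounded enters, and is the main obstacle: it is what prevents the averaging built into $\sL$ from collapsing the norm. Fixing $\ep>0$, set $A=\{x:|\xi(x)|^2>\nor{\xi}_\infty^2-\ep\}$, so $\mu(A)>0$, and $B=\vpi(A)$, which is Borel since $\vpi$ preserves null sets. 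As $A\subseteq\vpi^{-1}(B)$,
\[
(\nor{\xi}_\infty^2-\ep)\,\mu(A)\le\int_A|\xi|^2\,d\mu\le\int_{\vpi^{-1}(B)}|\xi|^2\,d\mu=\int_B k\,d\mu\le\nor{k}_\infty\,\mu(\vpi(A))\le K\,\nor{k}_\infty\,\mu(A),
\]
the final step being $\vpi$-boundedness. Dividing by $\mu(A)>0$ and letting $\ep\to0$ gives $\nor{k}_\infty\ge\nor{\xi}_\infty^2/K$, hence $\nor{\xi}_\sL^2\ge(Kc_1^2)^{-1}\,\nor{\xi}_\infty^2$. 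This is the sought lower estimate, and together with the upper estimate completes the norm equivalence, and therefore the proof.
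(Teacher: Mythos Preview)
Your proof is correct and follows the same overall strategy as the paper: reduce everything to showing that $\nor{\cdot}_\sL$ and $\nor{\cdot}_\infty$ are equivalent norms on $L^\infty(X,\mu)$, with the upper bound $\nor{\xi}_\sL\le\nor{\xi}_\infty$ coming for free from $S_\vpi$ being an isometry, and the substance lying in the lower bound, where both the constant $c_1$ (bounding $C_\vpi$) and the $\vpi$-boundedness constant $K$ enter. Both arguments arrive at exactly the same constant $(Kc_1^2)^{-1}$.

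The execution of the lower bound differs. The paper first reduces to positive simple functions by density and then tests $\nor{aS_\vpi}$ on the explicit unit vector $\chi_{\vpi^{-1}(\vpi(E_1))}/\sqrt{\mu(\vpi^{-1}(\vpi(E_1)))}$ in the range of $S_\vpi$, using $\mu(\vpi^{-1}(\vpi(E)))\le c_1^2\mu(\vpi(E))\le c_1^2K\mu(E)$. You instead factor $\sL(\xi^*\xi)=a_\vpi^{-2}\,C_\vpi^*M_{|\xi|^2}C_\vpi$ inside the commutative algebra $L^\infty(X,\mu)$, absorb the $a_\vpi^{-2}\ge c_1^{-2}I$ contribution, identify $C_\vpi^*M_{|\xi|^2}C_\vpi$ as the multiplication operator $M_k$ with $\int_B k\,d\mu=\int_{\vpi^{-1}(B)}|\xi|^2\,d\mu$, and bound $\nor{k}_\infty$ directly by choosing $B=\vpi(A)$ for $A=\{|\xi|^2>\nor{\xi}_\infty^2-\ep\}$. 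Your route avoids the reduction to simple functions and makes the role of $\vpi$-boundedness slightly more transparent (it is exactly what lets you divide by $\mu(A)$); the paper's route keeps everything phrased as operator norms and needs no identification of $M_k$. They are two packagings of the same measure-theoretic estimate.
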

\begin{proof}
It suffices to show that the norm $\nor{\cdot}_\sL$ on the module $L^\infty(X,\mu)_\sL$ is equivalent to the norm $\nor{\cdot}$ of $L^\infty(X,\mu)$.

First we show that there is a constant $M$ such that $\nor{a} \leq M \nor{aS_\vpi}$ for every $a\in L^\infty(X,\mu)$. Since $\nor{a S_\vpi}^2 = \nor{|a| S_\vpi}^2$ and $\nor{a}=\nor{|a|}$, it is enough to show that the relation  $\nor{a} \leq M \nor{aS_\vpi}$ holds for all positive $a$ in the norm-dense subspace of simple functions. To this end let $a = \sum_{i=1}^n d_i \chi_{E_i}$ where the sets $E_i$ are disjoint, of positive measure, and $d_1 > d_2 > \cdots > d_n > 0$; hence $\nor{a} = d_1$. To compute the norm $\nor{aS_\vpi}$ we let $a$ act on unit vectors in the range of $C_\vpi$; equivalently with unit vectors in the range of $S_\vpi$.
Let $E = E_1$ and $\xi = \frac{1}{\sqrt{\mu(\vpi^{-1}(\vpi(E)))}} \chi_{\vpi^{-1}(\vpi(E))}$. Then $\xi $ is a unit vector in the range of $S_\vpi$. Also, the assumptions on $S_\vpi$ and $\mu$ imply that $\mu(\vpi^{-1}(\vpi(E))) \leq c_1^2 \mu(\vpi(E)) \leq c_1^2 K \mu(E)$. Therefore
\begin{align*}
\nor{aS_\vpi}^2
& \geq
\nor{a \xi}_2^2
 =
\int_X a^2 |\xi|^2 \, d\mu
 \geq
\int_X d_1^2 \chi_E |\xi|^2 \, d\mu\\
& =
\frac{1}{\mu(\vpi^{-1}(\vpi(E)))} \int_X d_1^2 \chi_{E} \, d\mu
 =
\frac{\mu(E)}{m(\vpi^{-1}(\vpi(E)))} d_1^2
 \geq
\frac{1}{c_1^2K} d_1^2.
\end{align*}
Since $\nor{a} = d_1,$ we have that $\nor{aS_\vpi} \geq \frac{1}{c_1\sqrt{K}} \nor{a} $ on a norm dense subspace.

By the above inequality we obtain the equivalence of the norms $\nor{\cdot}_\sL$ and $\nor{\cdot}$. Indeed we have that
\begin{align*}
\frac{1}{M^2}\nor{a}^2 
& \leq 
\nor{|a|S_\vpi}^2
= 
\nor{S_\vpi^* |a|^2 S_\vpi} 
= 
\nor{\sL(a^*a)} \\
& = 
\nor{a}_\sL^2 
 =
\nor{S_\vpi^* a^*a S_\vpi} 
\leq 
\nor{a^*a} = \nor{a}^2
\end{align*}
where we have used that $S_\vpi$ is an isometry, and the proof is complete.
\end{proof}

The following theorem is the analogue of \cite[Theorem 5.2]{CouMuhSch10}.

\begin{theorem} \label{T:Hilbertmodules}
Assume that $\mu$ is $\vpi$-bounded and that $C_\vpi$ is a bounded below operator of $\sB(L^2(X,\mu))$. Then the following are equivalent:
\begin{enumerate}
\item $\{\xi_i\}_{i=1}^n$ is an orthonormal basis for the Hilbert module $L^\infty(X,\mu)_\sL$;
\item $\{\xi_i S_\vpi\}_{i=1}^n$ is an orthonormal basis for the Hilbert module $\sE(X,\mu)$;
\item $\{\xi_i S_\vpi\}_{i=1}^n$ is a Cuntz family that implements $\al$.
\end{enumerate}
\end{theorem}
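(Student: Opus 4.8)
The plan is to treat the equivalence in two halves. The equivalence of (2) and (3) is essentially free: once one checks that $\xi_i S_\vpi \in \sE(X,\mu)$, which follows from $S_\vpi \in \sE(X,\mu)$ together with commutativity of $L^\infty(X,\mu)$ via $(\xi_i S_\vpi) a = \xi_i \al(a) S_\vpi = \al(a)(\xi_i S_\vpi)$, one applies Lemma~\ref{L:Ebasis} directly to the family $\{\xi_i S_\vpi\}_{i=1}^n$. Thus the substance lies in linking (1) with (3). I would organize this around the right-module map $\Phi\colon L^\infty(X,\mu)_\sL \to \sE(X,\mu)$ given by $\Phi(\xi) = \xi S_\vpi$. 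Using $S_\vpi a = \al(a) S_\vpi$ one sees that $\Phi$ intertwines the two right actions, and the computation $\sca{\xi_i S_\vpi, \xi_j S_\vpi} = S_\vpi^* \xi_i^* \xi_j S_\vpi = \sL(\xi_i^* \xi_j) = \sca{\xi_i,\xi_j}_\sL$ shows that $\Phi$ preserves inner products. In particular orthonormality of $\{\xi_i\}$ in $L^\infty(X,\mu)_\sL$ is equivalent to orthonormality of $\{\xi_i S_\vpi\}$ in $\sE(X,\mu)$, so this requirement is common to all three statements and I only need to match the completeness/reconstruction parts.

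The bridge between the reconstruction identity in $L^\infty(X,\mu)_\sL$ and the Cuntz completeness relation is the identity
\[
\al(\sL(b)) S_\vpi = S_\vpi \sL(b) = S_\vpi S_\vpi^* b S_\vpi = P b S_\vpi, \qquad b\in L^\infty(X,\mu),
\]
where $P := S_\vpi S_\vpi^*$ and the first equality is $S_\vpi c = \al(c) S_\vpi$ applied to $c = \sL(b)$. Right-multiplying the reconstruction formula $\eta = \sum_i \xi_i \cdot \sca{\xi_i,\eta}_\sL = \sum_i \xi_i \al(\sL(\xi_i^* \eta))$ by $S_\vpi$ and using this identity termwise turns it into $\eta S_\vpi = \big(\sum_i \xi_i P \xi_i^*\big)\eta S_\vpi$, which is exactly the image under $\Phi$ of the would-be completeness relation $\sum_i (\xi_i S_\vpi)(\xi_i S_\vpi)^* = I$.

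To pass from the compressed identity $\eta S_\vpi = \big(\sum_i \xi_i P\xi_i^*\big)\eta S_\vpi$ to the genuine operator equality $\sum_i \xi_i P\xi_i^* = I$, I would use that the constant function $\mathbf 1$ lies in the range of $S_\vpi$: indeed $C_\vpi \mathbf 1 = \mathbf 1$ and $C_\vpi$ has closed range (being bounded below) equal to $\text{ran}(S_\vpi)$, so $P\mathbf 1 = \mathbf 1$. Evaluating the compressed identity on $S_\vpi^* \mathbf 1$ then gives $\eta = \big(\sum_i \xi_i P\xi_i^*\big)\eta$ as vectors in $L^2(X,\mu)$ for every $\eta \in L^\infty(X,\mu)$; since $L^\infty(X,\mu)$ is dense in $L^2(X,\mu)$ and $\sum_i \xi_i P\xi_i^*$ is bounded, this forces $\sum_i \xi_i P\xi_i^* = I$, and with orthonormality and $\xi_i S_\vpi \in \sE(X,\mu)$ we obtain (3). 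Conversely, assuming (3), the element $m := \sum_i \xi_i \al(\sL(\xi_i^*\eta)) - \eta \in L^\infty(X,\mu)$ satisfies $m S_\vpi = 0$ by the same identity, whence $m\mathbf 1 = m P\mathbf 1 = (m S_\vpi) S_\vpi^* \mathbf 1 = 0$ and $m=0$, recovering reconstruction, i.e.\ (1).

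The main obstacle is precisely that $S_\vpi$ is a proper (non-unitary) isometry, so $P = S_\vpi S_\vpi^* \neq I$ and $\Phi$ cannot be inverted by the naive formula $T \mapsto T S_\vpi^*$ (which only recovers $\xi P$); the reconstruction and completeness identities are initially available only after compression by $S_\vpi$, that is, only on $\text{ran}(S_\vpi)$. Upgrading these to equalities of operators on all of $L^2(X,\mu)$ is the delicate point, and it is what the fixed-vector observation $P\mathbf 1 = \mathbf 1$ together with the density of $L^\infty(X,\mu)$ in $L^2(X,\mu)$ is designed to overcome.
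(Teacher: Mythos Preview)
Your argument is correct and follows essentially the same route as the paper: you use Lemma~\ref{L:Ebasis} for $(2)\Leftrightarrow(3)$, the inner-product-preserving map $\xi\mapsto \xi S_\vpi$ for orthonormality, and the separating vector $\mathbf 1$ together with $C_\vpi\mathbf 1=\mathbf 1$ (equivalently $P\mathbf 1=\mathbf 1$) to upgrade the $S_\vpi$-compressed identities to genuine operator equalities. The only cosmetic difference is that you isolate the identity $\al(\sL(b))S_\vpi = PbS_\vpi$ and verify the reconstruction formula directly, whereas the paper computes $\sum_i S_i a S_i^*(\mathbf 1)$ by hand for $(1)\Rightarrow(3)$ and, for $(3)\Rightarrow(1)$, checks that $\sca{\xi_i,a}_\sL=0$ for all $i$ forces $aS_\vpi=0$ and hence $a=0$.
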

\begin{proof}
It will be convenient to denote $S_i:=\xi_i S_\vpi$. Note that by definition $S_i \in \sE(X,\mu)$ and recall that the equivalence $[(2) \Leftrightarrow (3)]$ is Lemma \ref{L:Ebasis}. Moreover we write $I \in \sB(L^2(X,\mu))$ also for the unit of $L^\infty(X,\mu)$. The constant function of $L^2(X,mu)$ will be denoted by $\mathbf{1}$.

\noindent $[(1) \Rightarrow (3)]$: First we have that the $S_i$ have orthogonal ranges, since
\[
S_i^* S_j = S_\vpi^* \xi_i^* \xi_j S_\vpi
= \sL(\xi_i^* \xi_j ) = \sca{\xi_i, \xi_j}_\sL = \de_{ij} I.
\]
Recall that the constant function $\mathbf{1} \colon X \to \bbC$ is a separating vector and $C_\vpi(\mathbf{1})=\mathbf{1}\circ \vpi=\mathbf{1}$. Therefore,
\begin{align*}
S_i a S_i^*(\mathbf{1})
& =
S_i a S_i^* C_\vpi(\mathbf{1})
 =
\xi_i S_\vpi a S_\vpi^* \xi_i^* S_\vpi a_\vpi (\mathbf{1})\\
& =
\al(a) \xi_i S_\vpi \sL(\xi_i^*\al(a_\vpi)) (\mathbf{1})
 =
\al(a) \xi_i \al( \sL(\xi^*_i\al(a_\vpi))) S_\vpi (\mathbf{1}).
\end{align*}
for all $i = 1, \dots, n$. Since $\{\xi_i\}_{i=1}^n$ defines a basis of $L^\infty(X,\mu)_\sL$ we have that $a= \sum_{i=1}^n \xi_i \cdot \sca{\xi_i,a}_\sL = \sum_{i=1}^n \xi_i \al(\sL(\xi_i^*a))$ for all $a\in L^\infty(X,\mu)$. Thus
\begin{align*}
\sum_{i=1}^n S_i a S_i^* (\mathbf{1})
& =
\sum_{i=1}^n \al(a) \xi_i \al(\sL(\xi^*_i\al(a_\vpi))) S_\vpi (\mathbf{1}) \\
& =
\al(a) \sum_{i=1}^n \xi_i \al(\sL(\xi^*_i\al(a_\vpi))) S_\vpi (\mathbf{1}) \\
& =
\al(a) \al(a_\vpi) S_\vpi (\mathbf{1})
 =
\al(a) S_\vpi a_\vpi (\mathbf{1}) \\
& =
\al(a) C_\vpi(\mathbf{1})
 =
\al(a)(\mathbf{1}).
\end{align*}
Since $\mathbf{1}$ is a separating vector we obtain that $\{S_i\}$ implements $\al$.

\noindent $[(3) \Rightarrow (1)]$: Note that the functions $\xi_i$ are orthonormal, since
\[
\sca{\xi_i, \xi_j}_\sL = \sL(\xi_i^* \xi_j) = C_\vpi^* \xi_i^* \xi_j C_\vpi = S_i^* S_j = \de_{ij}I .
\]
To see that the $\{\xi_i\}_{i=1}^n$ span $L^\infty(X,\mu)_\sL$, let an element $a \in L^\infty(X,\mu)$ with $\sca{\xi_i, a}_\sL = 0$ for all $i$. Then
\begin{align*}
(a S_\vpi)^*
& =
S_\vpi^* a^* \cdot \sum_{i=1}^n S_i S_i^*
 =
S_\vpi^* a^* \cdot \sum_{i=1}^n \xi_i S_\vpi S_i^* \\
& =
\sum_{i=1}^n (S_\vpi^* a^* \xi_i S_\vpi) S_i^*
 =
\sum_{i=1}^n \sca{a, \xi_i}_\sL S_i^*
= 0,
\end{align*}
so that $a S_\vpi = 0$. Hence $a C_\vpi=0$, thus $a(\mathbf{1}) = a C_\vpi(\mathbf{1}) = 0$. Since $\mathbf{1}$ is a separating vector we obtain that $a=0$.
\end{proof}

\begin{remark} \label{R:basis}
Assume that $\vpi\colon X \rightarrow X$ has $N$ Borel sections as in Proposition \ref{P:decomposition}. Then the $N$ isometries $S_i$ of Proposition \ref{P:decomposition} can be written as
\begin{align*}
S_i
  =
M_{\chi_{Y_i}} M_{u_i} C_\vpi a_{\vpi}
 =
M_{\chi_{Y_i}} M_{u_i} \al(a_\vpi) C_\vpi
 =
M_{\chi_{Y_i}} M_{u_i} M_{h\circ \vpi} C_\vpi,
\end{align*}
where $u_i $ are as in Proposition \ref{P:isometry} for $\psi = \psi_i$ and $a_\vpi=M_h \in L^\infty(X,\mu)$. Therefore, the elements $\xi_i =  M_{\chi_{Y_i}} M_{u_i} M_{h\circ \vpi} \in L^\infty(X,\mu)$ define a basis for $L^\infty(X,\mu)_\sL$.

There is also a converse of the above scheme that works at the level of $*$-homomorphisms. We would like to thank Philip Gipson for bringing this to our attention. If there is a Cuntz family $\{S_i\}_{i=1}^n$ in $\sB(L^2(X,\mu))$ that implements $\al$ then $S_i^* a S_i \in L^\infty(X,\mu)$ for all $i=1, \dots, n$. This follows because $L^\infty(X,\mu)$  is a MASA, $S_i b = \al(b) S_i$, and
\[
S_i^* a S_i \cdot b = S_i^* \al(b) a S_i = b \cdot S_i^* a S_i,
\]
for all $b \in L^\infty(X,\mu)$. Furthermore $S_iS_i^*$ commutes with every $a \in L^\infty(X,\mu)$, thus the $*$-homomorphisms $\be_i \colon L^\infty(X,\mu) \to L^\infty(X,\mu)$ given by $\be_i(a)=S_i^*aS_i$ are $n$ left inverses for $\al$.
\end{remark}


\end{document}